\numberwithin{equation}{section}
\numberwithin{figure}{section}
\theoremstyle{plain}
\newtheorem{thm}{Theorem}
\theoremstyle{plain}
\newtheorem*{thm*}{Theorem}
\theoremstyle{plain}
\newtheorem{cor}[thm]{Corollary}
\theoremstyle{plain}
\newtheorem*{cor*}{Corollary}
\theoremstyle{plain}
\newtheorem{lem}[thm]{Lemma}
\theoremstyle{plain}
\newtheorem{prop}[thm]{Proposition}
\theoremstyle{definition}
\newtheorem{defn}[thm]{Definition}
\theoremstyle{definition}
\newtheorem{example}[thm]{Example}
\theoremstyle{definition}
\newtheorem{assumption}[thm]{Assumption}
\theoremstyle{remark}
\newtheorem{rem}[thm]{Remark}
\newcommand{\contr}{\mathbin{\lrcorner}}
\DeclareMathOperator{\Diff}{Diff}
\title{Yang--Mills connections on conformally compact manifolds}
\author{Marco Usula\thanks{email: marco.usula@ulb.ac.be}}
\affil{KU Leuven, Université Libre de Bruxelles}
\date{}
\begin{document}

\maketitle

\begin{abstract}
We study a boundary value problem for Yang--Mills connections on bundles over a conformally compact manifold $\overline{M}$. We prove that, for
every Yang--Mills connection $A$ that satisfies an appropriate nondegeneracy
condition, and for every small deformation $\gamma$ of $A_{|\partial\overline{M}}$,
there is a Yang--Mills connection in the interior that extends $A_{|\partial\overline{M}}+\gamma$.
As a corollary, we confirm an expectation of Witten mentioned in his
foundational paper about holography \cite{Witten}.
\end{abstract}


\section{Introduction}

A \emph{conformally compact manifold }is a pair $\left(\overline{M},g\right)$,
consisting of a compact manifold with boundary $\overline{M}$ and
a metric $g$ in the interior $M$ of $\overline{M}$, with the following
property: for some (hence any) boundary defining function $\rho$
for $\overline{M}$, the metric $\rho^{2}g$ extends to a
metric on $\overline{M}$. The prototypical example is the hyperbolic metric, which can be seen as a conformally compact metric on the closed
unit ball $\overline{\mathbb{B}}^{n+1}\subseteq\mathbb{R}^{n+1}$.

Fix an oriented conformally compact manifold $\left(\overline{M},g\right)$,
and a Hermitian vector bundle $\overline{E}\to\overline{M}$. A Hermitian
connection $A$ on $\overline{E}$ is called \emph{Yang--Mills }if it satisfies the equation
\[
d_{A}^{*}F_{A}=0.
\]
This equation is a priori only defined in the interior, because the metric $g$ develops a singularity at $\partial \overline{M}$. However,
there is a way to interpret the equation above as a nonlinear \emph{$0$-elliptic} equation modulo gauge,
defined over the whole compactification (cf.\ Subsection \ref{subsec: 0-calculus}). In this paper, we will present
a proof of a perturbative existence result on Yang--Mills connections,
inspired by a theorem of Graham and Lee on \emph{Poincaré--Einstein
metrics}, which satisfy a different second order nonlinear $0$-elliptic
equation modulo gauge.

A \emph{Poincaré--Einstein} metric on $\overline{M}$ is a conformally
compact metric $g$ on $\overline{M}$ which satisfies the Einstein equation
\[
\text{Ric}_{g}+ng=0.
\]
A conformally compact metric $g$ on $\overline{M}$ induces a conformal
class $\left[(\rho ^2 g)_{|X}\right]$ on the boundary $\partial \overline{M}=X$, called the \emph{conformal
infinity} of $g$. For example, the hyperbolic metric induces the
conformal structure on $S^{n}$ containing the round metric. In the
seminal paper \cite{GrahamLee}, Graham and Lee considered the following
Dirichlet boundary value problem: given a conformal class $\mathfrak{c}$
on $X$, find a Poincaré--Einstein metric $g$ on $\overline{M}$
such that $\left[(\rho ^2 g)_{|X}\right]$. Their main result is
roughly speaking the following: let $g_{0}$ be the hyperbolic metric
on $\mathbb{B}^{n+1}$, and let $\mathfrak{c}_{0}$ be the conformal
class on $S^{n}$ containing the round metric; then, for any conformal
class $\mathfrak{c}$ on $S^{n}$ sufficiently close to $\mathfrak{c}_{0}$,
there is a conformally compact metric $g$ on $\overline{\mathbb{B}}^{n+1}$,
unique modulo diffeomorphisms if sufficiently close to $g_{0}$, whose
conformal infinity is $\mathfrak{c}$. As shown independently by Lee \cite{LeeFredholm} and Biquard \cite{Biquard_Metriques},
this result generalizes to Poincaré--Einstein metrics that satisfy
a certain nondegeneracy condition.

The aim of this paper is to prove the Yang--Mills analogues of the
results just mentioned in the context of Poincaré--Einstein metrics.
We will define affine spaces $\mathcal{A}$ and $\mathcal{A}\left(X\right)$
of Hermitian connections on $\overline{E}$ and $\overline{E}_{|X}$,
modelled on appropriate Banach spaces, and we will define a group
$\mathcal{G}$ of gauge transformations of $\overline{E}$ that restrict
to the identity on the boundary, again modelled on a Banach space.
The main theorem is analougous to Lee's Theorem A in \cite{LeeFredholm} and Theorem I.4.8 in \cite{Biquard_Metriques}. We ignore regularity issues for now; see Theorem \ref{thm: main theorem} for a more precise
statement.
\begin{thm*}Assume that $n+1\geq 4$.
Let $A\in\mathcal{A}$ be a smooth Yang--Mills connection, and suppose
that $A$ satisfies the following nondegeneracy condition: the Laplace-type
operator
\[
L_{A}=d_{A}^{*}d_{A}+d_{A}d_{A}^{*}+\left(-1\right)^{n}\star\left[\cdot\land\star F_{A}\right]
\]
on $\mathfrak{u}\left(E\right)$-valued $1$-forms (here $E=\overline{E}_{|M}$)
has vanishing $L^{2}$ kernel. Then, for every connection $\text{\ensuremath{A_{|X}+\gamma}}$
sufficiently close to $A_{|X}$ in $\mathcal{A}\left(X\right)$, there
is a Yang--Mills connection $B$ in $\mathcal{A}$, unique modulo $\mathcal{G}$ in a neighborhood of $A$, such that $B_{|X}=A_{|X}+\gamma$.
\end{thm*}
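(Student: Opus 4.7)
The plan is to recast the Yang--Mills equation modulo gauge as a nonlinear $0$-elliptic boundary value problem near $A$, and then solve it by means of the implicit function theorem in Banach spaces. Working within $\mathcal{A}$, modelled on a weighted $0$-Sobolev (or $0$-Hölder) space chosen so that $L_A$ is Fredholm in the sense of the Mazzeo--Melrose $0$-calculus, I would write nearby connections as $B=A+\alpha$ and impose the Coulomb-type slice condition $d_A^*\alpha=0$. Coupling this with the Yang--Mills equation $d_B^*F_B=0$ yields a nonlinear system $\Phi(\alpha)=0$ whose linearization at $\alpha=0$ is precisely the operator $L_A$ from the statement.

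To handle the inhomogeneous Dirichlet datum $B_{|X}=A_{|X}+\gamma$, I would first fix a bounded extension operator sending $\gamma\in\mathcal{A}(X)$ to $\widetilde{\gamma}\in\mathcal{A}$ with $\widetilde{\gamma}_{|X}=\gamma$, and then look for $\alpha=\widetilde{\gamma}+\beta$ with $\beta_{|X}=0$. The problem becomes $F(\beta,\gamma):=\Phi(\widetilde{\gamma}+\beta)=0$, with $F(0,0)=0$ and $D_\beta F(0,0)=L_A$ acting on $\mathfrak{u}(E)$-valued $1$-forms satisfying the Dirichlet condition $\beta_{|X}=0$.

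The analytic heart of the argument is to prove that this Dirichlet realization of $L_A$ is a topological isomorphism between the chosen pair of Banach spaces. The operator $L_A$ is $0$-elliptic and formally self-adjoint, and on the asymptotically hyperbolic background its principal symbol and indicial operator at $X$ can be computed explicitly. I would pick the weight parameter inside the Fredholm range determined by the indicial radius of $L_A$---this is where the dimensional hypothesis $n+1\geq 4$ enters, ensuring that the $L^2$ weight lies in this range---so that the $0$-calculus delivers Fredholmness of index zero. The triviality of the $L^2$ kernel, combined with self-adjointness and a regularity argument showing that elements of the kernel in the weighted space are automatically in $L^2$, then upgrades index zero to an isomorphism. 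The implicit function theorem produces the desired $\beta$, and uniqueness modulo $\mathcal{G}$ in a neighborhood of $A$ follows from the local slice property of the Coulomb gauge together with the fact that $\mathcal{G}$ fixes $X$.

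The main obstacle will be precisely this isomorphism statement for $L_A$. It requires (i) pinning down weighted Banach spaces in which the linear theory and the nonlinear mapping properties of $\Phi$ hold simultaneously, (ii) a careful analysis of the indicial operator of $L_A$ to locate the Fredholm range and justify the constraint $n+1\geq 4$, and (iii) a regularity bootstrap promoting kernel elements from the weighted space to $L^2$, so that the nondegeneracy hypothesis may be invoked. Once the linear isomorphism is secured, the tameness of the Yang--Mills nonlinearity on the chosen spaces and the verification of the Coulomb slice property are comparatively routine.
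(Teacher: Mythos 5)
Your overall strategy --- reformulate the problem as a gauge-fixed nonlinear $0$-elliptic equation whose linearization is $L_A$, establish Fredholmness of index zero on suitably weighted spaces via the Mazzeo theory, invoke the nondegeneracy hypothesis to get an isomorphism, apply the implicit function theorem, and derive uniqueness from a local slice theorem --- is exactly the route the paper follows (Theorem 3.16 together with Propositions 3.14 and 3.15). Two places, however, are glossed over in a way that hides a genuine step.

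First, the linearization at $B=A$ of the raw Yang--Mills equation $d_B^*F_B=0$ is $P_A = d_A^*d_A + (-1)^n\star[\cdot\wedge\star F_A]$, not $L_A$; the $d_Ad_A^*$ term is absent. To obtain $L_A$ one must add a DeTurck-type gauge-fixing term to the equation itself, as the paper does with $\mathcal{Y}_{A_0}(A) = d_A^*F_A + d_A d_{A_0+e(\gamma)}^*a$. Merely ``coupling'' $d_B^*F_B=0$ with the constraint $d_A^*\alpha=0$ gives a system whose linearization maps into $1$-forms plus $0$-forms, so you must say explicitly how the two are combined into a single Laplace-type equation; that choice controls both the Fredholm analysis and the next point.

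Second, showing that a solution of the gauge-fixed equation actually lies in the Coulomb slice --- so that it solves the original Yang--Mills equation, not merely the modified one --- is not ``comparatively routine.'' Applying $d_B^*$ to the gauge-fixed equation and using $d_B^*d_B^*F_B = 0$ produces a homogeneous equation $\Delta_B(\text{gauge term})=0$ for the twisted scalar Laplacian, and one must prove $\Delta_B$ is injective on the relevant weighted space. The paper does this (Proposition 3.15) by first proving invertibility of $\Delta_{A_0}$ on $\rho^\delta C_0^{k,\alpha}$ for $\delta\in(0,n)$ via the maximum principle, then showing that $\Delta_B - \Delta_{A_0}$ is a compact perturbation for $B$ close to $A_0$. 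This also explains why the paper works with $\delta\in(1,2)$ rather than the full Fredholm window $(1,n-1)$: the nonlinear terms in $\mathcal{Y}_{A_0}$ only map into the target space when $\delta<2$, a constraint beyond the linear Fredholm theory that your sketch does not flag.
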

As a corollary, we confirm an expectation of Witten mentioned in his
foundational paper about holography \cite{Witten}. This might be
regarded as the Yang--Mills analogue of Graham and Lee's Theorem
A in\textbf{ }\cite{GrahamLee}.
\begin{cor*}Assume that $n+1 \geq 4$.
Suppose that $H^{1}\left(\overline{M},X\right)=0$, and that $\overline{E}=\overline{M}\times\mathbb{C}^{r}$
is the trivial Hermitian vector bundle. Let $A$ be the trivial connection
on $\overline{E}$. Then, for every connection $A_{|X}+\gamma$ sufficiently
close to $A_{|X}$ in $\mathcal{A}\left(X\right)$, there is a Yang--Mills
connection $B$ in $\mathcal{A}$, unique modulo $\mathcal{G}$ in a neighborhood
of $A$, such that $B_{|X}=A_{|X}+\gamma$.
\end{cor*}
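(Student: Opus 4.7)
The plan is to deduce the Corollary directly from the main Theorem by verifying, in this special setting, that the $L^{2}$ kernel of $L_{A}$ vanishes. Since $A$ is the trivial connection on the trivial bundle $\overline{E}=\overline{M}\times\mathbb{C}^{r}$, its curvature $F_{A}$ vanishes, so the zeroth-order term $(-1)^{n}\star[\cdot\wedge\star F_{A}]$ in the definition of $L_{A}$ disappears and the operator reduces to the Hodge--de Rham Laplacian $d^{*}d+dd^{*}$ acting on $\mathfrak{u}(E)$-valued $1$-forms. Moreover, since $\overline{E}$ is trivial, so is $\mathfrak{u}(E)\cong M\times\mathfrak{u}(r)$; fixing a real basis of $\mathfrak{u}(r)$, the operator decouples into $\dim\mathfrak{u}(r)$ copies of the scalar Hodge Laplacian on $1$-forms of $(M,g)$, and its $L^{2}$ kernel is canonically identified with $\mathcal{H}^{1}_{L^{2}}(M)\otimes\mathfrak{u}(r)$.

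The next step is to invoke Mazzeo's theorem on the $L^{2}$ cohomology of conformally compact manifolds, which gives a canonical isomorphism $\mathcal{H}^{k}_{L^{2}}(M)\cong H^{k}(\overline{M},X;\mathbb{R})$ in every degree $k$ strictly below the middle dimension $(n+1)/2$. Under the standing assumption $n+1\geq 4$ one has $1<(n+1)/2$, and so $\mathcal{H}^{1}_{L^{2}}(M)\cong H^{1}(\overline{M},X)$, which vanishes by hypothesis. Consequently the $L^{2}$ kernel of $L_{A}$ is trivial, the nondegeneracy assumption of the Theorem is satisfied, and a direct application of the Theorem produces the Yang--Mills extension together with its uniqueness statement.

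The only real obstacle in this reduction is to recognize that the spectral nondegeneracy hypothesis is controlled by an $L^{2}$ cohomology group and to locate the correct version of Mazzeo's identification; once this is in hand, no further analytic input is required beyond the Theorem itself. In particular, the dimensional lower bound $n+1\geq 4$ plays exactly the role of keeping degree $1$ strictly below the middle dimension, where $L^{2}$ cohomology might otherwise fail to be finite-dimensional and the topological identification could break down.
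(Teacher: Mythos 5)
Your argument is correct and is essentially the paper's own: the trivial connection has $F_A=0$, so $L_A$ reduces to the Hodge Laplacian on $1$-forms; trivializing $\mathfrak{u}(\overline{E})\cong\overline{M}\times\mathbb{R}^{r^2}$ decouples it into $r^2$ scalar copies of $\Delta_1$; and Mazzeo's theorem identifies the $L^2_0$ kernel of $\Delta_1$ with $H^1(\overline{M},X)=0$, so the main theorem applies. One small imprecision: the correct range for Mazzeo's identification $\mathcal{H}^k_{L^2}(M)\cong H^k(\overline{M},X)$ is $k<n/2$ (not $k<(n+1)/2$), so the relevant condition in degree $1$ is $n>2$, i.e.\ $n+1\geq 4$ — which your hypothesis provides, so the conclusion stands; also note that in the paper the role of $n+1\geq 4$ is primarily to guarantee the existence of a Fredholm weight for $L_A$ (Remark \ref{rem: why n+1>=4}), and it coincidentally also supplies the degree bound needed here.
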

The main ideas of the proofs are similar to those used in the previously
mentioned papers about Poincaré--Einstein metrics. However, the technical
tools employed here are different. More precisely, we make use of
the $0$-calculus, introduced by Mazzeo and Melrose in
\cite{MazzeoMelrose}. We hope that his approach will open up possible future developements of Yang--Mills theory on
a large class of complete Riemannian manifolds with ``amenable''
singularities at infinity. More precisely, conformally compact metrics are special examples of complete metrics “with a Lie structure at infinity”, as described in \cite{AmmannLauterNistor_LieStructures}. In all these cases, there is a preferred Lie subalgebra $\mathcal{V}_{\text{Lie}}$ of the Lie algebra of vector fields on $\overline{M}$ tangent to the boundary, satisfying certain properties (cf.\ Definition 3.1 in \cite{AmmannLauterNistor_LieStructures}). Any such Lie algebra $\mathcal{V}_{\text{Lie}}$ comes equipped with special classes of \mbox{$\mathcal{V}_{\text{Lie}}$-metrics}, \mbox{$\mathcal{V}_{\text{Lie}}$-connections}, and $\mathcal{V}_{\text{Lie}}$-differential operators. Moreover, in \cite{CarvalhoNistorQiao_Fredholm}, it is developed a Fredholm theory for \mbox{“$\mathcal{V}_{\text{Lie}}$-elliptic"} operators. It is in general
difficult to find Einstein $\mathcal{V}_{\text{Lie}}$-metrics; on the other hand, flat connections on bundles over $\overline{M}$ always provide examples of Yang--Mills \mbox{$\mathcal{V}_{\text{Lie}}$-connections}. Thus, if one has good control on the model problems at infinity (cf.\ \cite{CarvalhoNistorQiao_Fredholm}), one could hope to be able to deform flat connections to new Yang--Mills \mbox{$\mathcal{V}_{\text{Lie}}$-connections}. However, it is important to remark that the Fredholm theory developed in \cite{CarvalhoNistorQiao_Fredholm} is based on Sobolev spaces, while for applications to nonlinear problems it is often important to be able to work on appropriate Hölder spaces, since they have better multiplication properties. In the particular case of complete edge metrics, the edge calculus of Mazzeo \cite{MazzeoEdgeI} provides this feature.

The paper is organized as follows. In Section \ref{sec: background},
we will recall the material we need about conformally compact manifolds
and $0$-elliptic operators. In Section \ref{sec:Yang=002013Mills-connections},
we will first define appropriate spaces of connections and gauge transformations,
we will present a ``slice theorem'' for the pullback action, and
finally we will prove the main results.

\subsection*{Acknowledgments}

I am grateful to Joel Fine for his continuous support and guidance
during this project, his countless important suggestions, and his
patience in reading the manuscript. I would also like to thank Michael
Singer and Rafe Mazzeo for numerous helpful conversations about this
and related topics, the anonymous referee for his/her helpful comments,
and Yannick Herfray for pointing out Witten's paper \cite{Witten}
to me. During the preparation of this paper, I was supported by the
ERC consolidator grant 646649 \textquotedblleft SymplecticEinstein\textquotedblright .

\section{Geometric and analytic background\label{sec: background}}

Let us fix once and for all an oriented, compact manifold with boundary
$\overline{M}^{n+1}$, with dimension $n+1\geq4$ (the reason for this restriction will be explained in Remark \ref{rem: why n+1>=4}). Denote by $M$
the interior of $\overline{M}$, and by $X$ its boundary.

\subsection{Conformally compact manifolds\label{subsec:Conformally-compact-manifolds}}

Recall that a \emph{boundary defining function} for $\overline{M}$ is a smooth
nonnegative map $\rho:\overline{M}\to\mathbb{R}$ such that $d\rho\not=0$
at each point of $X$, and $\rho^{-1}\left(0\right)=X$.
\begin{defn}
A \emph{conformally compact metric} on $\overline{M}$\emph{ }is a
metric $g$ in the interior $M$ such that for some (hence any) boundary
defining function $\rho$ for $\overline{M}$, the metric $\rho^{2}g$
extends smoothly to a metric on $\overline{M}$.
\end{defn}

\begin{example}
\label{exa:The-standard-example}The standard example of conformally
compact metric is the \emph{hyperbolic metric}: the metric
\[
g_{hyp}=\frac{4dx^{2}}{\left(1-\left|x\right|^{2}\right)^{2}}
\]
on the open unit ball $\mathbb{B}^{n+1}\subseteq\mathbb{R}^{n+1}$
is conformally compact with compactification given by the closed unit ball $\overline{\mathbb{B}}^{n+1}$: the function
$
\rho=\left({1-\left|x\right|^{2}}\right)/2
$
is a boundary defining function, and $\rho^{2}g$ is the Euclidean
metric on $\overline{\mathbb{B}}^{n+1}$.
\end{example}

Let $g$ be a conformally compact metric $g$. Then $g$ is negatively curved ``at infinity'': given a
boundary defining function $\rho$, the function $\left|d\rho\right|_{\rho^{2}g|X}$
is independent of the choice of $\rho$, and for every sequence $\left(p_{k},\pi_{k}\right)$
such that
\begin{align*}
p_{k} & \in M\\
p_{k} & \to p\in X\\
\pi_{k} & \text{ is a \ensuremath{2}-plane in \ensuremath{T_{p_{k}}M}},
\end{align*}
the sequence $\kappa_{g}\left(\pi_{k}\right)$ of the sectional
curvature of $g$ at $\pi_{k}$ converges to $-\left|d\rho\right|^{2}\left(p\right)$
(cf.\ \cite{MazzeoPhd}). For this reason, if $\left|d\rho\right|_{\rho^{2}g|X}\equiv1$,
then $g$ is also called \emph{asymptotically hyperbolic}.

Since it is singular at $X$, $g$ does not have a well defined restriction to the boundary. However, $g$ induces a conformal class on $X$, called the \emph{conformal infinity} of $g$:
\[
\mathfrak{c}_{\infty}\left(g\right):=\left\{ (\rho^{2}g)_{|X}:\rho\text{ is a boundary defining function}\right\} .
\]

Let $\rho$ be a boundary defining function for $\overline{M}$. Then
the flow of $\text{grad}_{\rho^{2}g}\rho$ starting at $X$ induces
a map $X\times[0,\varepsilon)\to\overline{M}$
and, since $d\rho\not=0$ along $X$, for $\varepsilon$ small enough
this map is an embedding. We call such an embedding a \emph{collar}
induced by $\rho$.
\begin{defn}
A\emph{ }boundary defining function $\rho$ for $\overline{M}$ is
called \emph{special} (with respect to $g$) if there exists an $\varepsilon>0$
such that, for every point $p\in X$, the function $\left|d\rho\right|_{\rho^{2}g}$
is constant along the integral curve $[0,\varepsilon)\to\overline{M}$
of $\text{grad}_{\rho^{2}g}\rho$ starting at $p$.
\end{defn}

It is proved in \cite{Graham_Volume}, Lemma 2.1, that for any choice of
$h_{0}\in\mathfrak{c}_{\infty}\left(g\right)$ there exists a (unique
in a neighborhood of the boundary) special boundary defining function
$\rho$ such that $\left(\rho^{2}g\right)_{|X}=h_{0}$. If $g$ is asymptotically
hyperbolic, $\rho$ is special iff $\left|d\rho\right|_{\rho^{2}g}\equiv1$
in a sufficiently small collar $X\times[0,\varepsilon)\to\overline{M}$;
in such a collar, $g$ takes the simple form
\[
g=\frac{d\rho^{2}+h\left(\rho\right)}{\rho^{2}},
\]
where $h\left(\rho\right)$ can be interpreted as a smooth path $[0,\varepsilon)\to C^{\infty}\left(S^{2}\left(T^{*}X\right)\right)$
of metrics on $X$ starting at $h_{0}$.

\subsection{\label{subsec:The--tangent-bundle}The $0$-tangent bundle}

By definition, a conformally compact metric $g$ on $\overline{M}$
can be seen as a smooth section of $S^{2}\left(T^{*}M\right)$ with
a ``double pole'' at infinity. There is an elegant and useful
way to desingularize $g$, by thinking of it as a metric smooth
up to the boundary on a vector bundle on $\overline{M}$ that replaces
$T\overline{M}$. Denote by $\mathcal{V}_0$ the space of smooth vector fields on $\overline{M}$ which vanish along $X$. Elements of $\mathcal{V}_{0}$ are called $0$\emph{-vector fields}.
$\mathcal{V}_{0}$ is a finitely generated and projective module over
$C^{\infty}\left(\overline{M}\right)$ and hence, by the Serre-Swan
Theorem, we can think of $\mathcal{V}_{0}$ as the module of smooth
sections of a vector bundle $^{0}T\overline{M}$, called the $0$\emph{-tangent
bundle} (cf.\ \cite{MazzeoPhd,MazzeoMelrose}). The inclusion
$\mathcal{V}_{0}\to C^\infty\left( T\overline{M} \right)$ induces a bundle map $^{0}T\overline{M}\to T\overline{M}$, and since at each point $p\in M$, the set $\left\{ V_{p}:V\in\mathcal{V}_{0}\right\} $
spans $T_{p}\overline{M}$, this map is an isomorphism in the interior.
Similarly, if we denote by $^{0}T^{*}\overline{M}$ the dual of $^{0}T\overline{M}$,
we have a bundle map $T^{*}\overline{M}\to{^{0}T^{*}\overline{M}}$
which is an isomorphism in the interior. In this language, conformally
compact metrics $g$ on $\overline{M}$ are just bundle metrics on
$^{0}T\overline{M}$, i.e.\ smooth up to the boundary and positive
definite sections of the symmetric power $S^{2}\left(^{0}T^{*}\overline{M}\right)$.
\begin{rem}
\label{Rem: tangent half spaces}Another model for the hyperbolic
space is the \emph{half-space} model. Consider the manifold with boundary
$\overline{\mathbb{H}}^{n+1}=\left\{ \left(t,\xi^{1},...,\xi^{n}\right):t\geq0\right\} \subseteq\mathbb{R}^{n+1}$,
and the following metric in the interior:
\[
g_{hyp}=\frac{dt^{2}+d\xi^{2}}{t^{2}}.
\]
Although
$\overline{\mathbb{H}}^{n+1}$ is noncompact, $^{0}T\overline{\mathbb{H}}^{n+1}$
is still well defined, and we can again interpret $g_{hyp}$ as a
bundle metric on $^{0}T\overline{\mathbb{H}}^{n+1}$. This half-space
model arises naturally on the tangent spaces of an
asymptotically hyperbolic manifold $\left(\overline{M},g\right)$ at points on the boundary. Given $p\in X$, denote by
$\overline{M}_{p}$ the closed inward-pointing half-space of $T_{p}\overline{M}$,
and denote by $M_{p}$ its interior. Choose a boundary defining function
$\rho$ inducing the metric $h_{0}$ on $X$, and choose oriented
normal coordinates $\left(x^{1},...,x^{n}\right)$ for $\left(X,h_{0}\right)$
centered at $p$. Then, in the linear coordinates $\left(t,\xi^{1},...,\xi^{n}\right)$
on $T_{p}\overline{M}$ induced by $\left(\rho,x^{1},...,x^{n}\right)$,
we have
\[
\overline{M}_{p}\equiv\left\{ \left(t,\xi\right):t\geq0\right\} ,
\]
and the globally defined hyperbolic metric on $M_{p}$
\[
g_{p}:=\frac{dt^{2}+d\xi^{2}}{t^{2}}
\]
does not depend on the choices of $\rho$ and $\left(x^{1},...,x^{n}\right)$.
\end{rem}

The exterior powers $^{0}\Lambda^{k}:=\Lambda^{k}\left(^{0}T^{*}\overline{M}\right)$
will be particularly important in this paper; the space of smooth
sections of $^{0}\Lambda^{k}$ over $\overline{M}$ will be denoted
by $^{0}\Omega^{k}$, and its elements will be called \emph{$0$-$k$-forms}.
The bundles $^{0}\Lambda^{k}$ allow us to think of sections of $\Lambda^{k}$
with ``poles at infinity'', as smooth sections up to the boundary
of $^{0}\Lambda^{k}$, just as we did for conformally compact metrics
(this approach is taken, for example, in \cite{MazzeoPhd,MazzeoHodge}).
As a concrete example, take a boundary defining function $\rho$.
Then the $1$-form $d\rho/\rho$ is defined only in the interior,
and it develops a simple pole at infinity. However, we can interpret
it as a globally defined smooth section of $^{0}\Lambda^{1}$. More
generally, any $\omega\in{^{0}\Omega^{k}}$ can be written as $\omega=\rho^{-k}\overline{\omega}$
for some smooth $k$-form $\overline{\omega}$ on $\overline{M}$.

A conformally compact metric $g$ on $\overline{M}$ induces bundle
metrics on each $^{0}\Lambda^{k}$, related to the $\rho^{2}g$ metrics induced on $\Lambda^{k}$ by
$\left|\omega\right|_{g}=\left|\rho^{k}\omega\right|_{\rho^{2}g}$. Moreover, the volume form $\text{dVol}_{g}$ extends uniquely to a
nowhere vanishing section of $^{0}\Lambda^{n+1}$, and the Hodge star
operator extends to a bundle isometry $
\star:{^{0}\Lambda^{k}}\to{^{0}\Lambda^{n+1-k}}$.

Note that $\mathcal{V}_{0}$ is a Lie subalgebra of the Lie algebra of vector fields on $\overline{M}$.
Thanks to this property, one can extend uniquely the exterior differential
$d$ in the interior, and the codifferential $d^*$ with respect to a conformally compact metric, to operators on $0$-differential forms. Recall that the action of $d^*$ on $k$-forms is
\[
d^*=\left(-1\right)^{\left( n+1 \right)\left( k+1 \right)+1 } \star d \star.
\]
This is a good point to set up one more notation. If $a\in{^{0}\Omega^{1}}$,
we will denote by $a \contr \cdot$ the adjoint of the operator $a\land\cdot  $ on $0$-differential forms. Therefore, if $\omega \in {^0 \Omega^k}$, we have
\[
a\contr \omega = \left(-1\right)^{\left(n+1\right)\left(k+1\right)+1}\star\left(a\land\star\omega\right).
\]

\subsection{\label{subsec: zero-connections}$0$-connections}

Let $\overline{E}\to\overline{M}$ be a vector bundle. Denote by $E$
the restriction of $\overline{E}$ to the interior $M$.
\begin{defn}
A \emph{$0$-connection} $A$ on $\overline{E}$ is a map $A:C^{\infty}\left(\overline{E}\right)\to C^{\infty}\left({^{0}\Lambda^{1}}\otimes\overline{E}\right) $
which is scalar-linear and satisfies the Leibniz formula $
A\left(fs\right)=df\otimes s+fAs $
for every $s\in C^{\infty}\left(\overline{E}\right)$, $f\in C^{\infty}\left(\overline{M}\right)$.
\end{defn}

Equivalently, a $0$-connection is a map $
A:\mathcal{V}_{0}\times C^{\infty}\left(\overline{E}\right)\to C^{\infty}\left(\overline{E}\right) $
which is $C^{\infty}\left(\overline{M}\right)$-linear in the first
entry, and such that for every $V\in\mathcal{V}_{0}$ the operator
$A_{V}:C^{\infty}\left(\overline{E}\right)\to C^{\infty}\left(\overline{E}\right)$
satisfies the Leibniz identity.

It follows from the definition that a $0$-connection
on $\overline{E}$ restricts to a connection on
$E$ in the usual sense, and that a ``genuine'' connection on $\overline{E}$ is in
particular a $0$-connection. The converse is not true. In fact, the local $1$-forms representing $A$ with respect to a smooth local frame
of $\overline{E}$ near a point $p\in X$ extend over the boundary to smooth $0$-$1$-form, but they might not extend to genuine $1$-forms. For example, if $g$ is a conformally compact metric on $\overline{M}$, then the Levi-Civita connection on $TM$ does not extend to a genuine connection on $T\overline{M}$, but it extends to a $0$-connection on $^0 T \overline{M}$.

A computation in local coordinates shows that, if $A$ is a $0$-connection
on $\overline{E}$, then the differential $d_{A}$, and the codifferential
$d_{A}^{*}$ with respect to a conformally compact metric on $\overline{M}$,
extend by continuity from the interior to differential operators  on $\mathfrak{gl}\left(\overline{E}\right)$-valued $0$-differential forms. Similarly, the curvature $F_{A}$ of $A$ extends to a $\mathfrak{gl}\left(\overline{E}\right)$-valued
$0$-$2$-form.

In this paper, we will be almost exclusively concerned with ``genuine''
connections on vector bundles. However, it is useful to think
of genuine connections as ``degenerate" examples of \mbox{$0$-connections}. If $A$ is a $0$-connection, with respect to a smooth local frame of $\overline{E}$ defined in a neighborhood in $\overline{M}$ of a $p\in X$, we can write $d_A = d + [a\land \cdot]$, where $a$ is a $\mathfrak{gl}(\overline E)$-valued $0$-$1$-form. If $A$ is a genuine connection, then these $0$-$1$-forms all vanish along the boundary, so $d_A$ is "asymptotic" to the \emph{untwisted} exterior differential $d$. This property will be crucial in Subsection \ref{subsec:Analytic-preliminaries}.

\subsection{\label{subsec: 0-calculus}$0$-differential operators}

In this subsection, we will recall part of the elliptic theory\emph{
$0$-differential operators}, developed by Mazzeo and Melrose in \cite{MazzeoMelrose},
and then refined and generalized in \cite{MazzeoEdgeI,MazzeoEdgeII}.
A different successful approach to the same type of operators is developed
in \cite{LeeFredholm}. For reasons of brevity, we will adopt a
black box approach, and only present here the definitions and the
theorems needed. We refer to \cite{MazzeoEdgeI} for a complete
and detailed description of this theory.

Let $\overline{E},\overline{F}$ be vector bundles on $\overline{M}$.
Denote by $E,F$ the restrictions of $\overline{E},\overline{F}$
to the interior $M$.
\begin{defn}
\label{Def: local expression 0-diff}A $P\in\Diff^{m}\left(\overline{E},\overline{F}\right)$
is called a $0$\emph{-differential operator} if, near any point $p\in X$,
we can write
\[
P=\sum_{j+\left|\beta\right|\leq m}P_{j,\beta}\left(\rho,x\right)\left(\rho\partial_{\rho}\right)^{j}\left(\rho\partial_{x}\right)^{\beta}
\]
with respect to a choice of a boundary defining function $\rho$,
a chart $\left(x^{1},...,x^{n}\right)$ of $X$ centered at $p$,
and trivializations of $\overline{E},\overline{F}$ near $p$. The
space of $0$-differential operators of order $m$ from sections of
$\overline{E}$ to sections of $\overline{F}$ is denoted by $\Diff_{0}^{m}\left(\overline{E},\overline{F}\right)$.
\end{defn}

\begin{example}
It is easy to check that the exterior differential $d$
is in $\Diff_{0}^{1}\left({^{0}\Lambda^{k}},{^{0}\Lambda^{k+1}}\right)$,
and that the codifferential $d^{*}$ with respect to a conformally compact
metric on $\overline{M}$ is in $\Diff_{0}^{1}\left({^{0}\Lambda^{k+1}},{^{0}\Lambda^{k}}\right)$.
More generally, if $A$ is a $0$-connection on $\overline{E}$, then
$d_{A}$ and $d_{A}^{*}$ are $0$-differential operators on $\mathfrak{gl}\left(\overline{E}\right)$-valued
$0$-differential forms.
\end{example}

We will now define some Banach spaces of functions and sections of
vector bundles on $\overline{M}$, naturally associated to the Lie
algebra $\mathcal{V}_{0}$. Choose an auxiliary conformally compact
metric $g$ on $\overline{M}$. We denote by $L^2_0$ and $C^{0,\alpha}_0$ the standard Banach spaces of functions associated to the complete Riemannian manifold $\left(M,g\right)$ (note that conformally compact metrics have positive injectivity radius). Now, choose an auxiliary boundary
defining function $\rho$, and define
\begin{align*}
\rho^{\delta}L_{0}^{2,k} & =\left\{ \rho^{\delta}u:V_{1}\cdots V_{l}u\in L_{0}^{2},\forall V_{j}\in\mathcal{V}_{0},l\leq k\right\} \\
\rho^{\delta}C_{0}^{k,\alpha} & =\left\{ \rho^{\delta}u:V_{1}\cdots V_{l}u\in C_{0}^{0,\alpha},\forall V_{j}\in\mathcal{V}_{0},l\leq k\right\} .
\end{align*}
To make these spaces well defined, we can embed $L_{0}^{2}$ and $C_{0}^{0,\alpha}$
in the space $C^{-\infty}\left(\overline{M}\right)$ of extendible
distributions, i.e.\ the dual (with respect to the topology of uniform convergence of all derivatives) of the space $\dot{C}^{\infty}\left(\Lambda\right)$
of smooth densities on $\overline{M}$ vanishing to infinite order
along $X$. The derivatives above are then well defined weakly.

We can define norms on $\rho^{\delta}L_{0}^{2,k}$ and $\rho^{\delta}C_{0}^{k,\alpha}$
by choosing an auxiliary $0$-connection $\nabla$ on $^{0}T\overline{M}$
(for example the Levi-Civita $0$-connection induced by $g$) and
defining
\begin{align*}
\left|\left|u\right|\right|_{\rho^{\delta}L_{0}^{2,k}}^{2} & =\left|\left|\rho^{-\delta}u\right|\right|_{L_{0}^{2,k}}^{2}=\sum_{j=0}^{k}\left|\left|\nabla^{j}\left(\rho^{-\delta}u\right)\right|\right|_{L_{0}^{2}}^{2}\\
\left|\left|u\right|\right|_{\rho^{\delta}C_{0}^{k,\alpha}} & =\left|\left|\rho^{-\delta}u\right|\right|_{C_{0}^{k,\alpha}}=\sum_{j=0}^{k}\sup_{p\in M}\left|\left(\nabla^{j}\left(\rho^{-\delta}u\right)\right)_{p}\right|+\left[\nabla^{k}\left(\rho^{-\delta}u\right)\right]_{\alpha}.
\end{align*}
Here $\nabla^{j}\left(\rho^{-\delta}u\right)$ is interpreted as a
section (in the interior) of the bundle $\left(^{0}\Lambda^{1}\right)^{\otimes^{j}}$
equipped with the bundle metric induced by $g$, and
\[
\left[\nabla^{k}v\right]_{\alpha}=\sup_{\begin{smallmatrix}p\not = q\in M\\
d_{g}\left(p,q\right)<\iota_{g}
\end{smallmatrix}}\frac{\left|T_{p\to q}\left(\nabla^{k}v\right)_{p}-\left(\nabla^{k}v\right)_{q}\right|}{d_{g}\left(p,q\right)^{\alpha}},
\]
where $\iota_{g}$ is the positive injectivity radius of $g$ and  $T_{p\to q}$ is the parallel transport of the $0$-connection
(that is, the parallel transport of the restriction of the $0$-connection
to the interior) induced by $\nabla$ on $\left(^{0}\Lambda^{1}\right)^{\otimes^{k}}$
along the shortest geodesic connecting $p$ to $q$. Similarly, if $\overline{E}$ is a vector bundle on $\overline{M}$,
we can define the weighted spaces $\rho^{\delta}L_{0}^{2,k}\left(\overline{E}\right)$ and $\rho^{\delta}C_{0}^{k,\alpha}\left(\overline{E}\right)$
by means of auxiliary choices of $\rho,g,\nabla$, a bundle metric
$\left\langle ,\right\rangle _{\overline{E}}$ and a $0$-connection
$\nabla^{\overline{E}}$.

As usual, the norms we just defined do depend on the choices made.
However, the compactness of $\overline{M}$ and the fact that two
$0$-connections on $\overline{E}$ differ by a $\mathfrak{gl}\left(\overline{E}\right)$-valued
$0$-$1$-form, which is bounded in the metric induced by $g$ and
$\left\langle ,\right\rangle _{\overline{E}}$, implies that different
choices lead to equivalent norms. Therefore, the topologies on $\rho^{\delta}L_{0}^{2,k}\left(\overline{E}\right),\rho^{\delta}C_{0}^{k,\alpha}\left(\overline{E}\right)$
are well defined.

An alternative way to define the same topologies is to use an appropriately
chosen set of local charts of $\overline{M}$ where the bundles involved
are trivialized. This local approach is taken for example in \cite{LeeFredholm},
where the following useful results are proved (Lemmas 3.6 and 3.7 in \cite{LeeFredholm}). Denote by $C^{k}\left(\overline{M}\right)$
and $C^{k,\alpha}\left(\overline{M}\right)$ the standard Banach
spaces on $\overline{M}$.
\begin{lem}
\label{lem: (the useful lemma)}Let $k,k'\in\mathbb{N}$, $\alpha\in\left(0,1\right)$,
and $\delta,\delta'\in\mathbb{R}$.
\begin{enumerate}
\item We have a bounded inclusion
\[
C^{k,\alpha}\left(\overline{M}\right)\hookrightarrow C_{0}^{k,\alpha}.
\]
More generally, we have a bounded inclusion
\[
C^{k,\alpha}\left(\Lambda^{k}\right)\hookrightarrow\rho^{k}C_{0}^{k,\alpha}\left({^{0}\Lambda^{k}}\right).
\]
\item If $\delta>0$, then we have a bounded inclusion
\[
\rho^{\delta}C_{0}^{0,\alpha}\hookrightarrow C^{0}\left(\overline{M}\right).
\]
\item If $k\geq k'$ and $\delta\geq\delta'$, then we have a
bounded inclusion
\[
\rho^{\delta}C_{0}^{k,\alpha}\hookrightarrow\rho^{\delta'}C_{0}^{k',\alpha},
\]
which is compact if $\delta>\delta'$ and $k>k'$.
\item The multiplication map
\[
\rho^{\delta}C_{0}^{k,\alpha}\times\rho^{\delta'}C_{0}^{k,\alpha}\to\rho^{\delta+\delta'}C_{0}^{k,\alpha}
\]
is continuous.
\end{enumerate}
\end{lem}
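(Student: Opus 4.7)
The plan is to reduce all four statements to calculations in a model boundary chart. Near any boundary point, take coordinates $(\rho, x^1, \ldots, x^n)$ with $\rho$ a boundary defining function, in which the $0$-vector fields are generated over $C^\infty(\overline{M})$ by $\rho\partial_\rho, \rho\partial_{x^1}, \ldots, \rho\partial_{x^n}$, and any fixed conformally compact metric $g$ is uniformly comparable to the half-space hyperbolic model $g_0 = (d\rho^2 + dx^2)/\rho^2$. Covering $\overline{M}$ by finitely many such boundary charts together with interior charts, and working with a uniform cover by $g$-balls of fixed small radius so that parallel transport and distances are controlled on each ball, it suffices to prove each claim in the model with uniform constants.

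For \emph{(1)}, a classical $C^{k,\alpha}(\overline{M})$ function $u$ has all derivatives $\partial_\rho^j \partial_x^\beta u$ bounded for $j + |\beta| \leq k$. Using $(\rho\partial_\rho)^j = \sum_{i \leq j} c_{ji}\, \rho^i \partial_\rho^i$ and its analogue for the $x$-directions, each $k$-fold application of $0$-vector fields to $u$ yields a combination of classical derivatives of order at most $k$ multiplied by bounded powers of $\rho$. For the Hölder seminorm, two nearby points $p, q$ in a unit $g_0$-ball centered at height $\rho_0$ satisfy $d_{\text{Eucl}}(p,q) \leq C \rho_0\, d_g(p,q)$, so a Euclidean $C^\alpha$ estimate automatically gives a $C^\alpha_0$ one; parallel transport is bounded in the local $0$-frame on such a ball. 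The weighted statement for forms follows because writing a classical $k$-form in the $0$-coframe dual to $\{\rho\partial_\rho, \rho\partial_{x^i}\}$ introduces an overall factor of $\rho^{-k}$. For \emph{(2)}, if $u = \rho^\delta v$ with $v \in C^{0,\alpha}_0$ and $\delta > 0$, then $|u| \leq \rho^\delta \|v\|_\infty \to 0$ uniformly as $\rho \to 0$, so $u$ extends continuously to $\overline{M}$ with value $0$ on $X$.

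For \emph{(3)}, the bounded inclusion with $k \geq k'$ and $\delta \geq \delta'$ is immediate from the definitions: higher-order $0$-derivatives control lower ones, and $\rho^{\delta - \delta'} \leq 1$. For compactness under strict inequalities, apply a diagonal Arzelà--Ascoli argument. On each compact set $\{\rho \geq \varepsilon\}$ the $0$-metric is comparable to the Euclidean one, so a bounded sequence in $\rho^\delta C^{k,\alpha}_0$ is bounded in classical $C^{k,\alpha}(\{\rho \geq \varepsilon\})$ and admits a subsequence converging in $C^{k',\alpha}$. The weight gap controls the tail, since on $\{\rho < \varepsilon\}$ one has $\sup |\rho^{\delta - \delta'} w| \leq \varepsilon^{\delta - \delta'} \|w\|_\infty$ for $w \in C^{k,\alpha}_0$, with analogous estimates for its $0$-derivatives and Hölder seminorm (via the Leibniz rule, using that $\rho^a \in C^{k,\alpha}_0$ for $a \geq 0$). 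Diagonalizing over $\varepsilon \to 0$ yields convergence in $\rho^{\delta'} C^{k',\alpha}_0$. For \emph{(4)}, the Leibniz rule $V(uv) = (Vu)v + u(Vv)$ for $V \in \mathcal{V}_0$, iterated, turns $C^{k,\alpha}_0$ into an algebra, with the Hölder term treated by the standard splitting $(uv)(p) - (uv)(q) = u(p)(v(p) - v(q)) + (u(p) - u(q))v(q)$. The identity $\rho^\delta u \cdot \rho^{\delta'} v = \rho^{\delta+\delta'} uv$ then transfers this to the weighted setting.

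The main technical obstacle is the Hölder seminorm, whose definition involves parallel transport $T_{p\to q}$ and the cutoff $d_g(p,q) < \iota_g$, and so a priori depends on the auxiliary choices of $g$ and $\nabla$. The cleanest way forward, as in \cite{LeeFredholm}, is to establish at the outset that in a uniform cover of $\overline{M}$ by distinguished coordinate balls --- fixed-size balls in the half-space model $(\overline{\mathbb{H}}^{n+1}, g_0)$ inside each boundary chart --- the $C^{k,\alpha}_0$-norm is equivalent to the usual Euclidean $C^{k,\alpha}$-norm in the rescaled coordinates. Equivalence under different auxiliary choices then follows from a compactness argument on bounded transition data, and each of the four claims becomes a routine Euclidean Hölder-space statement on a fixed reference ball.
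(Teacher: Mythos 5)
The paper does not prove this lemma; it is cited verbatim from Lemmas 3.6 and 3.7 of Lee \cite{LeeFredholm}, as the paper notes just before stating it. Your sketch follows Lee's approach of reducing to a uniform cover by Möbius-type boundary charts where the $0$-Hölder norm is comparable to a rescaled Euclidean Hölder norm, and the key ideas (the weight gain $\rho^{k}$ for classical $k$-forms, diagonal Arzelà--Ascoli combined with the weight gap for compactness, and Leibniz for the algebra structure) are all in place.
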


Let $P\in\Diff_{0}^{m}\left(\overline{E},\overline{F}\right)$. Then, directly from the definitions, we have boundedness between appropriate $0$-Sobolev and $0$-Hölder spaces:
\begin{prop}
For every $\delta\in\mathbb{R}$, $k\in\mathbb{N}$, $\alpha\in\left(0,1\right)$,
the following maps are bounded:
\begin{align}
P & :\rho^{\delta}L_{0}^{2,k+m}\left(\overline{E}\right)\to\rho^{\delta}L_{0}^{2,k}\left(\overline{F}\right)\label{eq: P on sobolev}\\
P & :\rho^{\delta}C_{0}^{k+m,\alpha}\left(\overline{E}\right)\to\rho^{\delta}C_{0}^{k,\alpha}\left(\overline{F}\right).\label{eq: P on Hölder}
\end{align}
\end{prop}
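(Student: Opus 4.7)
The result is essentially a bookkeeping verification built into the very definitions of the $0$-Sobolev and $0$-Hölder spaces. My plan has three steps: reduce to the unweighted case $\delta=0$, reduce to the scalar case, and then verify boundedness of the two building blocks appearing in the local expression of Definition~\ref{Def: local expression 0-diff}.

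\emph{Reduction to $\delta = 0$.} By construction the map $u \mapsto \rho^{-\delta} u$ is a topological isomorphism $\rho^{\delta} L_{0}^{2,k} \to L_{0}^{2,k}$ and $\rho^{\delta} C_{0}^{k,\alpha} \to C_{0}^{k,\alpha}$, so it suffices to prove that the conjugated operator $\rho^{-\delta} P \rho^{\delta}$ is bounded in the unweighted spaces. I first check that $\rho^{-\delta} P \rho^{\delta} \in \Diff_{0}^{m}(\overline{E},\overline{F})$: a local computation shows $\rho^{-\delta}(\rho\partial_\rho)\rho^\delta = \rho\partial_\rho + \delta$ and $\rho^{-\delta}(\rho\partial_{x^i})\rho^\delta = \rho\partial_{x^i}$, so conjugation by $\rho^\delta$ preserves the class and order of $0$-differential operators.

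\emph{Reduction to the scalar case.} Since different auxiliary choices of $g$, $\nabla$, $\nabla^{\overline{E}}$, and bundle metric yield equivalent norms, I may compute everything in local trivializations of $\overline{E}$ and $\overline{F}$, where $P$ becomes a matrix of scalar $0$-differential operators. A finite partition of unity subordinate to a cover of $\overline{M}$ by such trivializing coordinate charts (some interior, some adapted to the boundary as in Definition~\ref{Def: local expression 0-diff}) localizes the problem. On interior charts the spaces $L_{0}^{2,k}$ and $C_{0}^{k,\alpha}$ coincide up to equivalence of norms with the standard spaces, and boundedness is classical. On boundary charts the local form in Definition~\ref{Def: local expression 0-diff} reduces boundedness of $P$ to boundedness of each operator of the shape $f(\rho,x)\, V_{1} \cdots V_{l}$ with $l \leq m$, $f \in C^{\infty}(\overline{M})$, and $V_{j} \in \{\rho\partial_\rho, \rho\partial_{x^i}\} \subset \mathcal{V}_{0}$.

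\emph{The two building blocks.} First, a single $V \in \mathcal{V}_{0}$, viewed as a smooth section of $^{0}T\overline{M}$, has uniformly bounded length $|V|_{g}$ on $\overline{M}$ by compactness, and the pointwise bound $|Vu| \leq |V|_g \cdot |\nabla u|_g$ together with its iterates shows $V : \rho^{0}L_{0}^{2,k+1} \to \rho^{0}L_{0}^{2,k}$ and $V : \rho^{0}C_{0}^{k+1,\alpha} \to \rho^{0}C_{0}^{k,\alpha}$ are bounded. Second, multiplication by $f \in C^{\infty}(\overline{M})$ is bounded on $L_{0}^{2,k}$ by Leibniz (each $0$-vector field derivative of $f$ stays in $C^{\infty}(\overline{M})$ and hence in $L^{\infty}$), and it is bounded on $C_{0}^{k,\alpha}$ by Lemma~\ref{lem: (the useful lemma)}(1) and (4), since $f \in C^{k,\alpha}(\overline{M}) \hookrightarrow C_{0}^{k,\alpha}$ and multiplication is continuous on the latter. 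Composing these building blocks at most $m$ times gives the required boundedness of each monomial $f V_1 \cdots V_l$, and summing over the partition of unity completes the proof.

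There is no substantive obstacle here — as the author signals with ``directly from the definitions'', the whole statement is a routine verification. The only nontrivial point is the conjugation identity in the first step, which is precisely what makes the $\delta$-weight compatible with the $0$-calculus.
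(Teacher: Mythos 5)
Your proposal is correct and is a careful fleshing-out of precisely the routine argument the paper compresses into the phrase ``directly from the definitions.'' The conjugation step (noting $\rho^{-\delta}(\rho\partial_\rho)\rho^{\delta}=\rho\partial_\rho+\delta$ and that this preserves $\Diff_0^m$), the partition-of-unity localization, and the boundedness of the two building blocks — contraction against a smooth section of ${}^{0}T\overline{M}$ and multiplication by a $C^\infty(\overline{M})$ coefficient, the latter via Lemma~\ref{lem: (the useful lemma)}(1),(4) in the H\"older case and Leibniz in the $L^2$ case — are all sound and constitute the intended proof.
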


We will now discuss the \emph{Fredholm} properties of these maps.
We will see that Fredholmness depends essentially on two microlocal
models associated to $P$. The first one is the \emph{principal \mbox{$0$-symbol}},
which is defined at each point of $\overline{M}$.
\begin{defn}
The \emph{principal $0$-symbol }of $P$ is the unique smooth section
$^{0}\sigma_{P}$ of the bundle $S^{m}\left(^{0}T\overline{M}\right)\otimes\hom\left(\overline{E},\overline{F}\right)$
extending the usual principal symbol of
$P$ from the interior. It is characterized by the induced fibrewise homogeneous of
degree $m$ map
\[
^{0}\sigma_{P}:{^{0}\Lambda^{1}}\to\hom\left(\overline{E},\overline{F}\right).
\]
$P$ is called $0$\emph{-elliptic }if, for every $\xi\in{^{0}\Lambda^{1}}$
nonzero, $^{0}\sigma_{P}\left(\xi\right)$ is invertible.
\end{defn}

\begin{example}
The Laplacian $\Delta$ with respect to a conformally compact metric
$g$ on $\overline{M}$ is $0$-elliptic with principal $0$-symbol $\alpha\mapsto-\left|\alpha\right|^{2}$.
The same is true for the Hodge Laplacians $d_{A}^{*}d_{A}+d_{A}d_{A}^{*}$
on $\mathfrak{gl}\left(\overline{E}\right)$-valued $0$-$k$-forms
associated to any $0$-connection $A$ on $\overline{E}$. 
\end{example}

Suppose that $P$ is $0$-elliptic. On a closed manifold, elliptic
operators induce Fredholm maps on appropriate Sobolev and Hölder spaces.
However, $0$-ellipticity is not enough to obtain Fredholmness of
the maps \ref{eq: P on sobolev}, \ref{eq: P on Hölder}:
\begin{example}
The Dirichlet boundary
value problem on hyperbolic space
\[
\begin{cases}
\Delta u=0 & u\in C^{\infty}\left(\overline{\mathbb{B}}^{n+1}\right)\\
u_{|S^{n}}=f & f\in C^{\infty}\left(S^{n}\right)
\end{cases}
\]
has a unique solution for every $f\in C^{\infty}\left(S^{n}\right)$.
The solutions lie in $\rho^{\delta}C_{0}^{k,\alpha}$ and $\rho^{\delta-\frac{n}{2}}L_{0}^{2,k}$
for every $\delta<0$ and $k\in\mathbb{N}$, so $\Delta$ has an infinite-dimensional
kernel on those spaces.
\end{example}

However, $0$-ellipticity is enough for $0$-elliptic regularity.
The following proposition is a consequence of Theorem 3.8 in \cite{MazzeoEdgeI}.
\begin{prop}
($0$-elliptic regularity) Let $v\in\rho^{\delta}L_{0}^{2,k}$
(resp. $v\in\rho^{\delta}C_{0}^{k,\alpha}$),
and let $u\in\rho^{\delta}L_{0}^{2}$ (resp.
$u\in\rho^{\delta}C_{0}^{0,\alpha}$) be
a weak solution of $Pu=v$. Then $u\in\rho^{\delta}L_{0}^{2,k+m}$
(resp. $u\in\rho^{\delta}C_{0}^{k+m,\alpha}$).
\end{prop}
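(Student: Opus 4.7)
The statement is entirely local in nature: it concerns regularity, not decay or integrability at infinity. In the interior of $M$ the operator $P$ is a classical elliptic differential operator, so standard interior elliptic regularity gives the claim on compact subsets of $M$. The content therefore lies in a neighborhood of the boundary $X$, where one must exploit the specific structure of a $0$-differential operator.

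My plan is to construct (or, more honestly, to quote) a parametrix in the small $0$-calculus of Mazzeo--Melrose. Since $P$ is $0$-elliptic of order $m$, the symbolic inversion carried out in Theorem 3.8 of \cite{MazzeoEdgeI} produces a pseudodifferential operator $Q$ in the small $0$-calculus of order $-m$ satisfying
\[
QP = I + R,
\]
where $R$ is a smoothing operator in the small $0$-calculus, i.e.\ its Schwartz kernel on the $0$-double space of $\overline{M}$ is smooth and vanishes to infinite order at the two boundary faces corresponding to the left and right variables approaching $X$.

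The two ingredients I would then need are mapping properties: first, that operators in $\Psi^{-m}_{0}$ send $\rho^{\delta}L^{2,j}_{0}$ into $\rho^{\delta}L^{2,j+m}_{0}$ and $\rho^{\delta}C^{j,\alpha}_{0}$ into $\rho^{\delta}C^{j+m,\alpha}_{0}$ for every $j$ and every $\delta$; and second, that the smoothing residual $R$ maps $\rho^{\delta}L^{2}_{0}$ and $\rho^{\delta}C^{0,\alpha}_{0}$ into weighted $0$-spaces of arbitrary finite order, thanks to the infinite-order vanishing of its kernel at the boundary faces. Granted these, one rewrites $Pu=v$ as
\[
u = Qv - Ru.
\]
The hypothesis $v\in\rho^{\delta}L^{2,k}_{0}$ and the mapping property of $Q$ give $Qv\in\rho^{\delta}L^{2,k+m}_{0}$, while $u\in\rho^{\delta}L^{2}_{0}$ combined with the smoothing property of $R$ places $Ru$ in a much better space than required. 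The Hölder case is identical.

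The main technical obstacle in carrying this out from scratch would be verifying the boundedness of operators in $\Psi^{-m}_{0}$ on the weighted $0$-Hölder spaces. The weighted $L^{2}$-boundedness of order-zero operators can be obtained by fairly standard Schur-type estimates on the blown-up $0$-double space, but the Hölder boundedness requires delicate pointwise control of the kernel singularity across the front face and the behaviour near the left and right boundary faces. This is exactly the analysis developed in \cite{MazzeoEdgeI}, which in keeping with the black-box conventions of this section we invoke as-is.
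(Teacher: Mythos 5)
The paper itself gives no proof: it simply records that the proposition ``is a consequence of Theorem 3.8 in \cite{MazzeoEdgeI}.'' Your parametrix argument in the small $0$-calculus --- symbolic inversion yielding $QP = I + R$ with $R\in\Psi_0^{-\infty}$, boundedness of $\Psi_0^{-m}$ operators on the weighted $0$-Sobolev and $0$-H\"older scales, and the infinite smoothing of $R$ coming from the infinite-order vanishing of its kernel at the left and right faces of the $0$-double space --- is exactly the content of that theorem, so your proposal is correct and amounts to a faithful unpacking of the paper's black-box citation.
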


In order to obtain Fredholm properties, we need to study an additional
model for $P$ at each point $p \in X$: the \emph{normal operator} $N_p\left(P\right)$.
\begin{defn}
The \emph{normal operator} $N_p\left(P\right)$ is a $0$-differential
operator on $\overline{M}_p$ (cf.\ Remark \ref{Rem: tangent half spaces}) between sections of the trivial bundles $\overline{M}_p\times \overline{E}_p$ and $\overline{M}_p\times \overline{F}_p$, defined in terms of
the local expression of $P$ as in Definition \ref{Def: local expression 0-diff}
by
\[
N_{p}\left(P\right)=\sum_{j+\left|\beta\right|\leq m}P_{j,\beta}\left(0,0\right)\left(t\partial_{t}\right)^{j}\left(t\partial_{\xi}\right)^{\beta}.
\]
\end{defn}
It can be checked that the definition above is well posed, i.e.\ it
does not depend on the choice of the coordinates and the trivializations
of the bundles.

A further reduction of the normal operator is the \emph{indicial operator}
of $P$.
\begin{defn}
The \emph{indicial operator} of $P$ is the family $I_{s}\left(P\right)$ of sections of $\text{\ensuremath{\hom}\ensuremath{\left(\overline{E}_{|X},\overline{F}_{|X}\right)}}$
parametrized by $s\in\mathbb{C}$, defined as
\[
I_{s}\left(P\right):=\left(\rho^{-s}P\rho^{s}\right)_{|X}
\]
where $\rho$ is a boundary defining function for $\overline{M}$.
A number $\mu\in\mathbb{C}$ is called an \emph{indicial root} for
$P$ at $p\in X$ if $I_{\mu}\left(P\right)\left(p\right):\overline{E}_{p}\to\overline{F}_{p}$
is not invertible. In terms of the local expression as in Definition
\ref{Def: local expression 0-diff}, the indicial operator is
\[
I_{s}\left(P\right)\left(p\right)=\sum_{j\leq m}P_{j,0}\left(0,0\right)s^{j}.
\]
\end{defn}

Directly from the definition, we have $I_{s}\left(P\right)\left(p\right)=I_{s}\left(N_{p}\left(P\right)\right)$,
so the indicial roots of $P$ at $p\in X$ are exactly the (constant)
indicial roots of $N_{p}\left(P\right)$. Moreover, $I_{s}\left(P\right)\left(p\right)$
is a polynomial in $s$ with coefficients in $\hom\left(\overline{E}_{p},\overline{F}_{p}\right)$,
and with invertible leading coefficient (it is the principal $0$-symbol
of $P$ evaluated at $\left(d\rho/\rho\right)_{p}$), so at each
$p\in X$ there are only finitely many indicial roots.

The following proposition summarizes some useful algebraic properties of these model operators.
\begin{prop}
\label{prop: composition properties of normal ops}Let $\overline{U},\overline{V},\overline{W}$
be bundles over $\overline{M}$. Let $P\in\Diff_{0}^{k}\left(\overline{U},\overline{V}\right)$.
\begin{enumerate}
\item The formal adjoint $P^{*}$ with respect to a conformally compact
metric $g$ on $\overline{M}$ and bundle metrics on $\overline{U},\overline{V}$
is an element of $\Diff_{0}^{k}\left(\overline{V},\overline{U}\right)$,
and
\[
\begin{array}{rll}
^{0}\sigma_{P^{*}}\left(\alpha\right) & =\left(^{0}\sigma_{P}\left(\alpha\right)\right)^{*} & \forall\alpha\in{^{0}\Lambda^{1}}\\
N_{p}\left(P^{*}\right) & =N_{p}\left(P\right)^{*} & \forall p\in X\\
I_{s}\left(P^{*}\right) & =I_{n-\overline{s}}\left(P\right)^{*} & \forall s\in\mathbb{C}.
\end{array}
\]
\item Let $Q\in\Diff_{0}^{h}\left(\overline{V},\overline{W}\right)$. Then
$Q\circ P\in\Diff_{0}^{k+h}\left(\overline{U},\overline{W}\right)$,
and
\[
\begin{array}{rll}
^{0}\sigma_{Q\circ P}\left(\alpha\right) & ={^{0}\sigma_{Q}}\left(\alpha\right)\circ{^{0}\sigma_{P}}\left(\alpha\right) & \forall\alpha\in{^{0}\Lambda^{1}}\\
N_{p}\left(Q\circ P\right) & =N_{p}\left(Q\right)\circ N_{p}\left(P\right) & \forall p\in X.
\end{array}
\]
\end{enumerate}
\end{prop}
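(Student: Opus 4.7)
The proof is essentially computational and reduces to working in the local normal form of Definition~\ref{Def: local expression 0-diff}. I would base everything on two structural facts: (a) $\mathcal{V}_0$ is a $C^{\infty}(\overline{M})$-module and a Lie subalgebra of vector fields on $\overline{M}$, and (b) any conformally compact metric has volume form $\text{dVol}_g = \rho^{-(n+1)} f\, d\rho\, dx$ in a collar, with $f$ smooth and positive on $\overline{M}$.

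For part (1), the core calculation is that the formal adjoint of each $0$-vector field generator with respect to $\text{dVol}_g$ lies in $\Diff_0^1$: integration by parts gives $(\rho\partial_\rho)^{*} = n - \rho\partial_\rho - \rho\partial_\rho\log f$, and an analogous expression holds for $(\rho\partial_{x^i})^{*}$, where the ``correction'' terms are smooth functions on $\overline{M}$ vanishing along $X$. Putting $P^{*}$ into normal form by commuting coefficients through monomials therefore produces an element of $\Diff_0^{k}$. The principal $0$-symbol identity follows from the classical interior formula by continuity. For the normal operator, since the correction terms vanish on $X$, freezing coefficients at $p$ kills them, so $N_p((\rho\partial_\rho)^{*}) = n - t\partial_t$, which coincides with the formal adjoint of $N_p(\rho\partial_\rho) = t\partial_t$ computed on $\overline{M}_p$ with the hyperbolic volume form $t^{-(n+1)}dt\,d\xi$; extending this monomial-by-monomial yields $N_p(P^{*}) = N_p(P)^{*}$. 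The indicial-root shift $s \mapsto n - \overline{s}$ then comes from combining $(\rho\partial_\rho)^{j}\rho^{s} = s^{j}\rho^{s}$ and $[(\rho\partial_\rho)^{*}]^{j}\rho^{s} = (n-s)^{j}\rho^{s} + O(\rho)$ with the complex conjugation built into the Hermitian pairing defining the adjoint; equivalently one verifies it directly by reading off $[P^{*}]_{j,0}(0,0) = [P_{j,0}(0,0)]^{*}$.

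For part (2), closure of $\Diff_0$ under composition is immediate from (a): commuting a smooth coefficient past $\rho\partial_\rho$ or $\rho\partial_{x^i}$ produces a $C^{\infty}(\overline{M})$ factor plus the same $0$-vector field applied to the coefficient, so by induction $Q\circ P$ admits a normal-form expression of order $k+h$. The principal $0$-symbol identity again follows by continuity from the classical multiplicativity of principal symbols. For the normal operator, the freezing map that sends each normal-form coefficient $P_{j,\beta}(\rho,x)$ to $P_{j,\beta}(0,0)$ is compatible with composition modulo terms whose coefficients vanish on $X$; since such terms contribute nothing after evaluating at $\rho=0$, the identity $N_p(Q\circ P) = N_p(Q)\circ N_p(P)$ drops out.

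The only step requiring real care is bookkeeping the error terms generated by integration by parts (for the adjoint) and by commuting coefficients through monomials (for the composition), and verifying that these errors have the expected structure --- smooth on $\overline{M}$, and vanishing on $X$ whenever they should disappear from $N_p$ or from $I_s$. Once the adjoint of $\rho\partial_\rho$ is explicitly in hand and the complex conjugation in the indicial formula is tracked correctly, everything else is organized term-by-term verification.
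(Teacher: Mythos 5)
The paper does not actually prove this proposition; it treats the $0$-calculus as a black box, citing Mazzeo's work. Your direct local-normal-form computation is the standard route, and the main line of your argument is sound: closure of $\Diff_0$ under composition and formal adjoint follows from the Lie-module structure of $\mathcal{V}_0$; the principal $0$-symbol identities follow by continuity from the interior; and the normal-operator identities come from the fact that the errors generated by integrating by parts or commuting coefficients through monomials are $O(\rho)$ and hence drop out upon freezing at $(\rho,x)=(0,0)$. The key explicit identity $(\rho\partial_\rho)^* = n - \rho\partial_\rho - (\rho\partial_\rho\log f)$, its frozen version $n - t\partial_t$ matching the formal adjoint of $t\partial_t$ on $(\overline{M}_p, t^{-(n+1)}dt\,d\xi)$, and the computations $(\rho\partial_\rho)^j\rho^s = s^j\rho^s$ versus $[(\rho\partial_\rho)^*]^j\rho^s = (n-s)^j\rho^s + O(\rho^{s+1})$ are exactly the right pivots.

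However, the parenthetical aside ``equivalently one verifies it directly by reading off $[P^{*}]_{j,0}(0,0) = [P_{j,0}(0,0)]^{*}$'' is false, and if taken at face value it would produce the wrong indicial formula $I_s(P^*) = \sum_j [P_{j,0}(0,0)]^* s^j$ with no $n-s$ shift. Take $P = a\,\rho\partial_\rho$ with $a$ a constant matrix: then $P^* = a^*(n-\rho\partial_\rho) + O(\rho)$, so in normal form $(P^*)_{0,0}(0,0) = n a^*$ and $(P^*)_{1,0}(0,0) = -a^*$, whereas $[P_{0,0}(0,0)]^* = 0$ and $[P_{1,0}(0,0)]^* = a^*$. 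The $n$ and the sign coming from $(\rho\partial_\rho)^* = n - \rho\partial_\rho + O(\rho)$ mix contributions across different orders $j$, so the coefficients of $P^*$ are not simply the adjoints of those of $P$. The first argument you gave --- tracking the polynomial identity $(n-s)^j$ together with the complex conjugation implicit in the pointwise adjoint of $\hom(\overline{U}_p,\overline{V}_p)$-valued polynomials --- is the one that works, and the ``equivalently'' should be dropped.
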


We will now restrict to \emph{formally self-adjoint} operators. Assume that $\overline{E}$
has a bundle metric. Let $P\in\Diff_{0}^{m}\left(\overline{E}\right)$
be $0$-elliptic and formally self-adjoint. To use the Fredholm results of \cite{MazzeoEdgeI}, we need an
additional assuption on $P$, which is valid for many geometric operators
(and in particular for the operators we are interested in in this
paper):
\begin{assumption}
\label{hyp: constancy indicial roots}The
indicial roots of $P$ do not depend on $p \in X$.
\end{assumption}

From Proposition \ref{prop: composition properties of normal ops}, the indicial roots of $P$ are symmetric about the line $\Re\left(s\right)=n/2$ in $\mathbb{C}$. Suppose that there are no indicial roots with real part equal to $n/2$, and denote by $\left(\delta_{-},\delta_{+}\right)$
the maximal interval containing $n/2$ for which there are no indicial roots $s$ with $\Re\left(s\right)\in \left(\delta_{-},\delta_{+}\right)$. Note that this interval must be symmetric about $n/2$. The Fredholm theorem we need follows from Theorem 6.1 and Proposition 7.17 in \cite{MazzeoEdgeI}\footnote{There is a discrepancy between the weights in the $0$-Sobolev spaces appearing in \cite{MazzeoEdgeI}
and those appearing here. This is due to the following different conventional
choices. In \cite{MazzeoEdgeI}, the $L^{2}$ norms are all computed
with respect to a reference density \emph{smooth up to the boundary},
while we use the $0$-volume form $\text{dVol}_{g}$ associated to
a conformally compact metric $g$ on $\overline{M}$; since $\text{dVol}_{g}=\rho^{-\left(n+1\right)}\nu$
for some smooth volume form $\nu$ on $\overline{M}$, we have $L^{2}\left(\text{dVol}_{g}\right)=\rho^{\frac{n+1}{2}}L^{2}\left(\nu\right)$.}. See also Theorem C in \cite{LeeFredholm}.
\begin{thm}
\label{thm: Mazzeo's thm self-adjoint}(Mazzeo) Suppose that, for every $p \in X$, the normal operator $N_p \left(P\right)$ is invertible as a map $L^{2,m}_0\left(\overline{E}_p\right) \to L^2_0 \left(\overline{E}_p\right)$. Then, for every
$\delta\in\left(\delta_{-},\delta_{+}\right)$, $k\in\mathbb{N}$
and $\alpha\in\left(0,1\right)$, the maps
\begin{align}
P & :\rho^{\delta-\frac{n}{2}}L_{0}^{2,k+m}\left(\overline{E}\right)\to\rho^{\delta-\frac{n}{2}}L_{0}^{2,k}\left(\overline{E}\right)\label{eq: fredholm sobolev}\\
P & :\rho^{\delta}C_{0}^{k+m,\alpha}\left(\overline{E}\right)\to\rho^{\delta}C_{0}^{k,\alpha}\left(\overline{E}\right)\label{eq: fredholm Hölder}
\end{align}
are Fredholm of index zero. Moreover, they all have the same kernel.
\end{thm}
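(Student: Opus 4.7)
The plan is to use the $0$-calculus to construct a parametrix for $P$, deduce Fredholmness from compactness of the error terms, and then upgrade to index zero via formal self-adjointness together with the symmetry of indicial roots. The Mazzeo--Melrose $0$-calculus organizes pseudodifferential operators by the behavior of their Schwartz kernels on the $0$-blow-up $M^2_0$ of $\overline{M} \times \overline{M}$: such kernels are polyhomogeneous conormal distributions on $M^2_0$ with controlled exponents at the three boundary hypersurfaces, namely the lifted diagonal, the front face $\mathrm{ff}$ (over the boundary diagonal of $\overline{M}\times \overline{M}$), and the two side faces coming from $X \times \overline{M}$ and $\overline{M} \times X$.

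The first step is a two-stage parametrix construction. Using $0$-ellipticity, I would invert the principal $0$-symbol to produce a small parametrix $Q_0$ with $Q_0 P - I$ smoothing along the diagonal but nontrivial at $\mathrm{ff}$; the restriction of the error to $\mathrm{ff}$ is governed by the normal operators $N_p(P)$. The hypothesis that each $N_p(P)$ is invertible $L^{2,m}_0(\overline{E}_p) \to L^2_0(\overline{E}_p)$ then lets me kill this front-face error by adding a boundary correction $Q_1$ whose $M^2_0$-kernel has the prescribed leading behavior at $\mathrm{ff}$. The resulting $Q = Q_0 + Q_1$ has composition errors $QP - I$ and $PQ - I$ which vanish at $\mathrm{ff}$ and decay at both side faces with rates strictly inside the indicial gap $(\delta_-, \delta_+)$; a mapping-properties analysis of polyhomogeneous $0$-kernels then yields compactness of these errors on $\rho^{\delta - n/2} L^{2,k}_0(\overline{E})$ and $\rho^\delta C^{k,\alpha}_0(\overline{E})$ for every $\delta \in (\delta_-, \delta_+)$, whence Fredholmness on both scales by Atkinson's theorem.

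To identify kernels across weights and scales, I would appeal to boundary regularity in the $0$-calculus: any $L^2$ or Hölder solution of $Pu = 0$ is smooth in the interior by $0$-elliptic regularity, and near $X$ admits a polyhomogeneous expansion whose exponents are built from the indicial roots of $P$. Since no such roots lie in $(\delta_-, \delta_+)$, the leading exponent of any nontrivial kernel element is forced to have real part at least $\delta_+$, which puts $u$ in $\rho^{\delta' - n/2} L^{2,m}_0$ for every $\delta' \in (\delta_-, \delta_+)$; the Hölder kernels are identified similarly. For the index statement, I would use the $L^2$ duality $(\rho^s L^2_0)^* \cong \rho^{-s} L^2_0$ induced by the $\mathrm{dVol}_g$ pairing: the cokernel of $P$ at weight $\delta - n/2$ is the kernel of $P^* = P$ at weight $-(\delta - n/2) = (n - \delta) - n/2$. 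The symmetry $I_s(P^*) = I_{n - \bar s}(P)^*$ together with self-adjointness forces the indicial interval to be symmetric about $n/2$, so $n - \delta$ again lies in $(\delta_-, \delta_+)$, and the previously established equality of kernels across this range forces kernel and cokernel dimensions to agree.

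The main technical obstacle is the construction of the boundary parametrix $Q_1$ and the verification of its mapping properties on both the weighted $0$-Sobolev and $0$-Hölder scales. Controlling compositions of polyhomogeneous $0$-operators requires a careful analysis of the pushforward of conormal distributions under the blow-down $M^2_0 \to \overline{M}^2$, and translating the abstract $L^2$-invertibility of each $N_p(P)$ into polyhomogeneous data for the Schwartz kernel of $Q_1$ at $\mathrm{ff}$ is precisely where all the hypotheses are used; this is the heart of Mazzeo's work and the reason the theorem is cited rather than reproved here.
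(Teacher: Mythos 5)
Your proposal is a faithful sketch of Mazzeo's original argument---parametrix construction in the $0$-calculus, compactness of the error terms, polyhomogeneous boundary regularity to identify kernels across weights and scales, and duality plus symmetry of the indicial interval for the index-zero claim---which is precisely the proof the paper defers to by citing Theorem 6.1 and Proposition 7.17 of \cite{MazzeoEdgeI}. The one step the paper does spell out, in the remark following the theorem, is the index-zero argument via the $\rho^{\delta-\frac{n}{2}}L^2_0$ Hilbert adjoint $P^\dagger = \rho^{2\delta-n}P\rho^{n-2\delta}$; this is the same computation you gave, phrased in terms of a Hilbert-space adjoint rather than the $L^2_0$ duality pairing.
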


\begin{rem}
The proof that the index of the maps \ref{eq: P on sobolev} and \ref{eq: P on Hölder} is zero, for a given fixed weight $\delta \in \left(\delta_-,\delta_+\right)$, works exactly as in the closed case using the generalized inverse $G$ and orthogonal projectors $\Pi_1$ and $\Pi_2$ of $P$ as an unbounded operator on $\rho^{\delta - \frac{n}{2}}L^2_0$, with respect to the Hilbert product induced by a choice of $\rho$. Using the mapping properties of $G,\Pi_1 , \Pi_2$ proved in \cite{MazzeoEdgeI}, one can prove that the range of \ref{eq: fredholm sobolev} (resp. \ref{eq: fredholm Hölder}) is complemented by the kernel of the $\rho^{\delta-\frac{n}{2}}L^{2}_0 $ formal adjoint $P^\dagger$ in $\rho^{\delta-\frac{n}{2}}L^{2}_0 $ (resp. $\rho^{\delta}C_{0}^{0,\alpha}$). $P^\dagger$ is related to $P^* = P$ by $P^\dagger = \rho^{2 \delta - n} P \rho^{n - 2\delta}$, so this kernel is isomorphic to the $\rho^{\left(n-\delta\right)-\frac{n}{2}}L^{2,m}_0 $ (resp. $\rho^{n-\delta}C_{0}^{0,\alpha}$) kernel of $P$. Since the interval $\left(\delta_{-},\delta_{+}\right)$ is symmetric about $n/2$, it contains $n-\delta$ as well, so this kernel is equal to the common kernel of the maps \ref{eq: fredholm sobolev} and \ref{eq: fredholm Hölder}. It follows that the index is zero.
\end{rem}

\begin{rem}
Thanks to the hypothesis of self-adjointness, in Theorem \ref{thm: Mazzeo's thm self-adjoint} it is sufficient to assume that $N_p \left(P\right)$ has vanishing $L^2_0$ kernel. If this holds, then using \cite{MazzeoEdgeI} one can prove exactly as above that the range is orthogonally complemented by the $L^2_0$ kernel, so the cokernel vanishes as well.
\end{rem}

\section{Yang--Mills Connections\label{sec:Yang=002013Mills-connections}}

Let us fix once and for all a conformally compact metric $g$ on $\overline{M}$.
For computational convenience, we assume that $g$ is asymptotically
hyperbolic, but there are no conceptual difficulties in extending
the results of this paper to general conformally compact metrics.
Let us also fix a Hermitian vector bundle $\overline{E}\to\overline{M}$
of complex rank $r$, and denote by $E$ its restriction to the interior
$M$. The metric on $\mathfrak{u}\left(\overline{E}\right)$ will
be the one induced by the linear metric $\left\langle A,B\right\rangle \mapsto-\text{tr}\left(AB\right)$
on $\mathfrak{u}\left(r\right)$.

\subsection{\label{subsec:Analytic-preliminaries}Analytic preliminaries}

Let $A$ be a smooth connection on $\overline{E}$. The results of
this paper rely on the Fredholm properties of two Laplace-type operators
associated to $A$:
\begin{align*}
\Delta_{A}:=d_{A}^{*}d_{A} & \in\text{Diff}_{0}^{2}\left(\mathfrak{u}\left(\overline{E}\right)\right)\\
L_{A}:=d_{A}^{*}d_{A}+d_{A}d_{A}^{*}+\left[\cdot^{*}\contr F_{A}\right] & \in\text{Diff}_{0}^{2}\left(^{0} \Lambda_{\mathfrak{u}\left(\overline{E}\right)}^{1}\right).
\end{align*}
$\Delta_A$ and $L_A$ are both $0$-elliptic and formally self-adjoint (the self-adjointness of the bundle map $a\mapsto \left[a^{*}\contr F_{A}\right] = \left(-1\right)^n \star \left[a \land \star F_{A}\right]$ on $\mathfrak{u}\left(\overline{E}\right)$-valued $0$-$1$-forms follows directly from the definition of $\left[\cdot \land \cdot \right]$ and our choice of the metric on $\mathfrak{u}\left(\overline{E}\right)$). Let's now determine their normal operators.
Given $p\in X$, choose a boundary defining function $\rho$ inducing
a metric $h_{0}=\left(\rho^{2}g\right)_{|X}$ on $X$, normal coordinates
$\left(x^{1},...,x^{n}\right)$ for $\left(X,h_{0}\right)$ centered
at $p$, a unitary frame for $\overline{E}$ near $p$, and an arbitrary
basis of $\mathfrak{u}\left(r\right)$. These choices allow us to
interpret the normal operator $N_{p}\left(\Delta_{A}\right)$ (resp.
$N_{p}\left(L_{A}\right)$) as an operator on $\mathbb{R}^{r^{2}}$-valued
functions (resp. $\mathbb{R}^{r^{2}}$-valued $0$-$1$-forms) over
$\overline{\mathbb{H}}^{n+1}$.
\begin{lem}\label{lem: normal ops of L_A}
The normal operator $N_{p}\left(\Delta_{A}\right)$ (resp. $N_{p}\left(L_{A}\right)$)
is the direct sum of $r^{2}$ copies of the Hodge Laplacian $\Delta_{0}$
(resp. $\Delta_{1}$) on functions (resp. $0$-$1$-forms) over $\overline{\mathbb{H}}^{n+1}$.
\end{lem}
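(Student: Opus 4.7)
The plan is to reduce the computation of $N_p(\Delta_A)$ and $N_p(L_A)$ to the untwisted case, by exploiting the fact that $A$ is a \emph{genuine} connection on $\overline{E}$, not merely a $0$-connection. I would fix a unitary frame of $\overline{E}$ in a neighborhood of $p$ in $\overline{M}$ and write $d_A = d + [a \wedge \cdot]$ for a smooth $\mathfrak{u}(\overline{E})$-valued genuine $1$-form $a$ (cf.\ the end of Subsection \ref{subsec: zero-connections}). In the coordinates $(\rho, x^1, \dots, x^n)$ chosen in the statement, a basis for ${}^0 T\overline{M}$ near $p$ is $\{\rho\partial_\rho, \rho\partial_{x^1}, \dots, \rho\partial_{x^n}\}$, with dual basis $\{d\rho/\rho, dx^i/\rho\}$ for ${}^0\Lambda^1$. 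The key observation is that when the genuine $1$-form $a = a_\rho\, d\rho + a_i\, dx^i$ is expressed in this $0$-basis, its components acquire an extra factor of $\rho$; hence the $0$-$1$-form components of $a$ vanish along $X$. Similarly, the $0$-$2$-form components of $F_A = da + \tfrac{1}{2}[a \wedge a]$ vanish at $X$ (to order $\rho^2$).

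Next, I would split the operators as $\Delta_A = d^*d + R_0$ and $L_A = (d^*d + dd^*) + R_1 + [\cdot^* \contr F_A]$, where $R_0$ collects the cross-terms $d^*[a \wedge \cdot] + [a \contr \cdot]\, d + [a \contr [a \wedge \cdot]]$ and $R_1$ the analogous cross-terms acting on $\mathfrak{u}(\overline{E})$-valued $0$-$1$-forms. When written in the local form of Definition \ref{Def: local expression 0-diff}, each coefficient of $R_0$, $R_1$, and of the bundle map $[\cdot^* \contr F_A]$ is a polynomial in the $0$-basis components of $a$, their $(\rho\partial_\rho, \rho\partial_{x^i})$-derivatives, and the $0$-basis components of $F_A$. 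By the previous paragraph, all such coefficients vanish at $(\rho, x) = (0, 0)$, so $N_p(R_0) = N_p(R_1) = N_p([\cdot^* \contr F_A]) = 0$. Consequently $N_p(\Delta_A) = N_p(d^*d)$ and $N_p(L_A) = N_p(d^*d + dd^*)$, each acting on the corresponding untwisted bundle.

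Finally, I would identify the normal operators of the untwisted Hodge Laplacians with their hyperbolic counterparts. Because $\rho$ is special with $(\rho^2 g)_{|X} = h_0$ and because $(x^i)$ are $h_0$-normal coordinates at $p$, the metric admits the expansion $g = (d\rho^2 + h_0(x) + O(\rho))/\rho^2$ with $h_0(x) = \delta_{ij} + O(|x|^2)$. The $0$-bundle metric induced by $g$ on each ${}^0\Lambda^k$, in the basis $\{d\rho/\rho, dx^i/\rho\}$ and its exterior products, therefore agrees at $(\rho, x) = (0, 0)$ with the hyperbolic $0$-metric on $\overline{\mathbb{H}}^{n+1}$ expressed in the basis $\{dt/t, d\xi^i/t\}$ (cf.\ Remark \ref{Rem: tangent half spaces}). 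Since $d$ and $d^* = (-1)^{(n+1)(k+1)+1}\star d\,\star$ are polynomial expressions in these metric coefficients and their $(\rho\partial_\rho, \rho\partial_{x^i})$-derivatives, evaluating at $(0,0)$ produces precisely the untwisted $d$ and $d^*$ of $g_p$ on $\overline{\mathbb{H}}^{n+1}$. The chosen unitary frame and basis of $\mathfrak{u}(r)$ trivialize the coefficient bundle locally as $\mathbb{R}^{r^2}$, so the resulting operators decompose as direct sums of $r^2$ copies of $\Delta_0$ and $\Delta_1$, respectively. The main bookkeeping hurdle is checking that every cross-term contributing to $R_1$ — including those arising from the Christoffel-type coefficients hidden in $d^*$ on $0$-$1$-forms — is a $0$-differential operator whose coefficients inherit the boundary vanishing of $a$; this follows systematically from the tensoriality of $[a \wedge \cdot]$ and $[a \contr \cdot]$ combined with the order-$\rho$ vanishing of $a$ in the $0$-basis.
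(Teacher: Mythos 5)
Your argument is correct in substance, but it takes a more laborious route than the paper's. Both start from the same key observation: since $A$ is a genuine connection, $d_A = d + [a\wedge\cdot]$ with $a$ a smooth $\mathfrak{u}(\overline{E})$-valued $1$-form on $\overline{M}$, hence an $O(\rho)$ $0$-$1$-form, and similarly $F_A$ is an $O(\rho^2)$ $0$-$2$-form. The paper then immediately invokes Proposition \ref{prop: composition properties of normal ops}: from $N_p(d_A)=d$ (the direct sum of $r^2$ copies of the exterior derivative on $\overline{\mathbb{H}}^{n+1}$) and the multiplicativity and adjoint formulas $N_p(Q\circ P)=N_p(Q)\circ N_p(P)$, $N_p(P^*)=N_p(P)^*$, it follows at once that $N_p(d_A^*d_A+d_Ad_A^*)=d^*d+dd^*$ on $\overline{\mathbb{H}}^{n+1}$, with the adjoint taken relative to the model metric $g_p$. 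You instead expand $\Delta_A$ and $L_A$ into the untwisted Hodge Laplacians plus cross-terms $R_0$, $R_1$, verify by hand that every coefficient of $R_0$, $R_1$, $[\cdot^*\contr F_A]$ in the local $0$-operator form vanishes at $(\rho,x)=(0,0)$, and then re-derive the identification $N_p(d^*d+dd^*)=\Delta_k$ on $\overline{\mathbb{H}}^{n+1}$ by inspecting the metric expansion directly. That works, and the boundary-vanishing argument you run is precisely what underlies Proposition \ref{prop: composition properties of normal ops}, but it recreates bookkeeping — especially the Christoffel-type coefficients hidden in $d^*$ on $0$-$1$-forms, which you rightly flag as the delicate step — that the composition formula is designed to package away. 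A minor slip: the adjoint of $[a\wedge\cdot]$ in your $R_0$ should be $[a^*\contr\cdot]$ (which equals $-[a\contr\cdot]$ here, $a$ being $\mathfrak{u}$-valued), matching the $\cdot^*$ that appears in the paper's formulas for $L_A$ and $P_{A_0}$.
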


\begin{proof}
With respect to the choices above, we have $d_{A}=d+\left[a\land\cdot\right]$,
where $a$ is a skew-Hermitian matrix of genuine $1$-forms extending smoothly over the boundary. Equivalently, they
are smooth $O\left(\rho\right)$ $0$-$1$-forms. Therefore, $\left[a\land\cdot\right]$
does not contribute to the normal operator $N_{p}\left(d_{A}\right)$,
which is just the direct sum of $r^{2}$ copies of the exterior derivative
$d:{^{0}\Omega^{k}}\to{^{0}\Omega^{k+1}}$ on $\overline{\mathbb{H}}^{n+1}$. It then follows from Proposition \ref{prop: composition properties of normal ops} that the normal operator of the twisted Hodge Laplacian $d_A^* d_A + d_A d_A^*$ on $\mathfrak{u}\left(\overline{E}\right)$-valued $0$-$k$-forms is just the direct sum of $r^2$ times the Hodge Laplacian on $0$-$k$-forms over $\overline{\mathbb{H}}^{n+1}$. Moreover, $F_A$ is a smooth $2$-form up to the boundary, so it can be interpreted as a $O\left(\rho^2\right)$ smooth $0$-$2$-form. Therefore, $\left[\cdot^{*}\contr F_{A}\right]$ does not contribute to the normal operator $N_p \left(L_A\right)$.
\end{proof}
\begin{rem}\label{rem: nondegenerate 0-connections}
Lemma \ref{lem: normal ops of L_A} is in general not true for a $0$-connection $A$.
The local $0$-$1$-forms representing $A$ with respect to a unitary
local frame of $\overline{E}$ near the boundary might not vanish along the boundary, giving a contribution to the normal operators. This complicates the analysis considerably.
\end{rem}

Lemma \ref{lem: normal ops of L_A} implies that the indicial roots of $\Delta_{A}$ (resp. $L_{A}$) do not depend
on $p$, and coincide with the indicial roots of the Hodge Laplacian $\Delta_{0}$ (resp.
$\Delta_{1}$). These  were computed by Mazzeo in \cite{MazzeoHodge,MazzeoPhd}:

\begin{lem}
(Mazzeo) The indicial roots of the Hodge Laplacian $\Delta_{k}$ on
$0$-$k$-forms over $\overline{\mathbb{H}}^{n+1}$ are
\[
\begin{cases}
0,n & k=0,n+1\\
k,n-k,k-1,n-\left(k-1\right) & k\not=0,n+1.
\end{cases}
\]
\end{lem}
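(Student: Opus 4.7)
The plan is to compute the indicial operator $I_s(\Delta_k)$ explicitly at a boundary point of $\overline{\mathbb{H}}^{n+1}$ and read off the values of $s$ for which it fails to be invertible. Since $\overline{\mathbb{H}}^{n+1}$ is translation-invariant in the $\xi$-directions, it is enough to compute the indicial operator at a single point using the global coordinates $(t, \xi^1, \ldots, \xi^n)$. The computation reduces everything to the indicial operator of $d$, which involves a single polynomial identity.

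At the fiber ${}^0\Lambda^k_p$, I would fix the $0$-orthonormal coframe $(dt/t, d\xi^i/t)$ and split
$${}^0\Lambda^k_p = \Lambda^k_{\mathrm{tan}} \oplus \tfrac{dt}{t}\wedge \Lambda^{k-1}_{\mathrm{tan}},$$
where $\Lambda^j_{\mathrm{tan}}$ denotes the span of wedges of the $d\xi^i/t$. A frozen-coefficient extension $\alpha + \tfrac{dt}{t}\wedge\beta$ multiplied by $t^s$ becomes $t^{s-k}$ times a constant $k$-form plus $t^{s-k+1}$ times $dt\wedge(\text{const form})$; applying $d$, only the first summand contributes (the second is $d$-closed because $dt\wedge dt = 0$), and one gets
$$I_s(d)\bigl(\alpha + \tfrac{dt}{t}\wedge\beta\bigr) = (s-k)\,\tfrac{dt}{t}\wedge\alpha.$$
The adjoint formula $I_s(P^*) = I_{n-\bar s}(P)^*$ of Proposition \ref{prop: composition properties of normal ops}, combined with $|dt/t|_g = 1$ and the orthogonality of the two summands, yields
$$I_s(d^*)\bigl(\alpha + \tfrac{dt}{t}\wedge\beta\bigr) = (n-s-k+1)\,\beta.$$

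Composing via the second part of Proposition \ref{prop: composition properties of normal ops} produces the block-diagonal formula
$$I_s(\Delta_k)\bigl(\alpha + \tfrac{dt}{t}\wedge\beta\bigr) = (s-k)(n-s-k)\,\alpha + (s-k+1)(n-s-k+1)\,\tfrac{dt}{t}\wedge\beta,$$
whose roots are $\{k,\,n-k\}$ from the tangential block and $\{k-1,\,n-(k-1)\}$ from the normal block. To conclude, I would handle the degenerate cases: when $k=0$ the normal summand is empty, leaving only $\{0,n\}$; when $k=n+1$ the tangential summand vanishes since $\dim\partial\overline{\mathbb{H}}^{n+1}=n$, again leaving $\{0,n\}$. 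The only real subtlety is keeping the Hodge-adjoint and orientation conventions consistent when deriving $I_s(d^*)$; once the pairing $\langle\tfrac{dt}{t}\wedge\alpha,\tfrac{dt}{t}\wedge\beta\rangle = \langle\alpha,\beta\rangle$ is used, everything else is purely algebraic.
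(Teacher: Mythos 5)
Your computation is correct, and since the paper cites Mazzeo's thesis and Hodge paper for this lemma without reproducing a proof, there is no in-paper argument to compare against; a direct computation of the indicial family via the tangential/normal decomposition is exactly the route Mazzeo takes. A few small notes. The exponent in your intermediate display should be $t^{s-k}$ rather than $t^{s-k+1}$: with $\tilde\beta = t^{k-1}\beta$ the constant extension of $\beta$, one has $t^s\,\tfrac{dt}{t}\wedge\beta = t^{s-k}\,dt\wedge\tilde\beta$; the conclusion (that this piece is $d$-closed after freezing coefficients, since the only surviving $t$-derivative produces $dt\wedge dt$) is unaffected. Also, you invoke ``Composing via the second part of Proposition \ref{prop: composition properties of normal ops},'' but that proposition gives multiplicativity of the \emph{normal} operator, $N_p(QP)=N_p(Q)N_p(P)$, not of the indicial family directly. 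What you actually need is $I_s(QP)=I_s(Q)I_s(P)$; this does follow, since $I_s(P)(p)=I_s(N_p(P))$ and the indicial family of a dilation-invariant operator on $\overline{\mathbb{H}}^{n+1}$ is multiplicative (only the $\beta=0$ terms in the $(t\partial_t)^j(t\partial_\xi)^\beta$ expansion survive, and commuting $t\partial_t$ past $t\partial_\xi$ only generates further $t\partial_\xi$ factors, which drop out). It would be worth stating this explicitly. Finally, your handling of the degenerate cases $k=0$ and $k=n+1$ is right, and a quick sanity check confirms the block-diagonal formula reproduces $s(n-s)$ and $(s-n)(-s)$ respectively, both with roots $\{0,n\}$.
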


It is proved in \cite{DonnellyXavier} that, for $k < n/2$ (and hence for $k> \left(n+2\right)/2$ by duality) the $L^2_0$ kernel of $ \Delta_k $ on $\overline{\mathbb{H}}^{n+1}$  vanishes. This result is generalized in \cite{MazzeoHodge}, where the  $L^2_0$ kernel of $ \Delta_k $ on a conformally compact manifold is given a topological interpretation:

\begin{thm} (Mazzeo)
Let $\left(\overline{M}^{n+1},g\right)$ be an oriented conformally compact manifold. Assume that $k< n/2$. Then the $L^2_0$ kernel of $\Delta_k$ is isomorphic to $H^k\left( \overline{M},X\right)$.
\end{thm}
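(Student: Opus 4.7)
The plan is to construct a natural isomorphism $\Phi : \ker_{L^2_0}\Delta_k \xrightarrow{\sim} H^k(\overline{M}, X)$ by showing that an $L^2_0$ harmonic form has enough decay at $X$ to define a relative de Rham class, and then using $L^2$-Hodge theory on the $0$-calculus side to establish surjectivity.

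The first step is to establish the decay. Given $\omega \in \ker_{L^2_0}\Delta_k$, the standard cutoff-exhaustion argument on complete Riemannian manifolds yields $d\omega = 0 = d^*\omega$, and $0$-elliptic regularity ensures $\omega$ is smooth. The preceding lemma lists the indicial roots of $\Delta_k$ as $\{k-1, k, n-k, n-k+1\}$; for $k < n/2$ they interlace as $k-1 < k < n/2 < n-k < n-k+1$, so Assumption \ref{hyp: constancy indicial roots} is satisfied and the root-free interval around $n/2$ is $(k, n-k)$. Mazzeo's polyhomogeneous expansion theorem (Theorem 7.3 of \cite{MazzeoEdgeI}) then forces the expansion of $\omega$ at $X$ to begin at the smallest indicial root above $n/2$, namely $n-k$, so $\omega \in \rho^{n-k}C_0^\infty({}^0\Lambda^k)$. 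Via the identification ${}^0\Lambda^k \simeq \rho^{-k}\Lambda^k$ near $X$, as a genuine $k$-form $\omega$ vanishes to order $n - 2k > 0$; combined with $d\omega = 0$, this produces a well-defined relative de Rham class $\Phi(\omega) := [\omega] \in H^k(\overline{M}, X)$.

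Injectivity of $\Phi$ follows from integration by parts. If $\omega = d\beta$ for some $\beta \in \Omega^{k-1}(\overline{M})$ with $\beta|_X = 0$, then on the truncation $M_\epsilon := \{\rho \geq \epsilon\}$,
\begin{equation*}
\|\omega\|^2_{L^2_0(M_\epsilon)} = \langle \beta, d^*\omega\rangle_{M_\epsilon} + \int_{\{\rho = \epsilon\}} \beta \wedge \star\omega.
\end{equation*}
The bulk term vanishes. For the boundary term, $\beta|_X = 0$ forces the pullback of $\beta$ to the slice $\{\rho = \epsilon\}$ to have $g$-norm $O(\epsilon^k)$, while $|\star\omega|_g = |\omega|_g = O(\epsilon^{n-k})$ by the Step 1 decay. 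Since the slice volume in $g$ grows like $\epsilon^{-n}$, the overall contribution is $O(\epsilon) \to 0$, and hence $\omega = 0$.

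For surjectivity I would invoke $L^2_0$-Hodge theory. By Lemma \ref{lem: normal ops of L_A} applied to the trivial connection, the normal operator of $\Delta_k$ is a direct sum of hyperbolic Hodge Laplacians on $0$-$k$-forms, whose $L^2_0$ kernel vanishes for $k < n/2$ by Donnelly-Xavier; Theorem \ref{thm: Mazzeo's thm self-adjoint} then makes $\Delta_k$ a Fredholm, self-adjoint, index-zero operator with closed range on $L^2_0({}^0\Lambda^k)$, yielding the orthogonal decomposition $L^2_0 = \ker\Delta_k \oplus \overline{\mathrm{im}\, d} \oplus \overline{\mathrm{im}\, d^*}$. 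Given a class in $H^k(\overline{M}, X) \simeq H^k_c(M)$, represent it by a compactly supported closed $k$-form $\eta$ on $M$; then $\eta \in L^2_0$, and its orthogonal projection to $\ker \Delta_k$ is a harmonic form $\omega$ with $\eta - \omega \in \overline{\mathrm{im}\, d} \cap \ker d$. I expect the main obstacle to be verifying that $\Phi(\omega) = [\eta]$, which amounts to showing that a closed element of $\overline{\mathrm{im}\, d}$ obtained from a compactly supported $\eta$ is actually exact via a primitive vanishing at $X$. This requires approximating the $L^2$-limit $d\alpha_n \to \eta - \omega$ by genuinely exact forms whose primitives have controlled decay, and then using the fast decay of $\omega$ from Step 1 together with the weighted Fredholm estimates to promote the approximation to a relatively exact primitive.
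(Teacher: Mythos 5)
The paper does not prove this theorem: it is cited as a black-box result from \cite{MazzeoHodge} (Theorem~1.3 there), so there is no in-paper proof to compare against. Evaluating your outline on its own terms, there are genuine gaps.

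In the injectivity step, your own exponents do not close. You estimate the pullback of $\beta$ to $\{\rho=\epsilon\}$ at $O(\epsilon^k)$ in $g$-norm, the form $\star\omega$ at $O(\epsilon^{n-k})$, and the slice volume at $O(\epsilon^{-n})$; but $k+(n-k)-n=0$, so this gives $O(1)$, not $O(\epsilon)$. As written, the boundary term is merely bounded, which does not force it to vanish, and the argument collapses. To rescue it one must use the finer algebraic structure of the leading term of the polyhomogeneous expansion: the indicial roots $n-k$ and $n-k+1$ attach to the tangential and normal components of $\omega$ respectively, so the tangential component of $\star\omega$ (the only part that survives pullback to the slice) is one order smaller than the crude $O(\epsilon^{n-k})$ bound suggests. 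That extra power of $\epsilon$ is exactly what is missing, but it requires explicitly splitting $\omega$ into its $d\rho/\rho$ and tangential pieces and tracking which one carries the leading coefficient; it does not follow from the scalar decay rate alone.

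The surjectivity step is, as you acknowledge, only a plan. The Kodaira decomposition $L^2_0=\ker\Delta_k\oplus\overline{\mathrm{im}\,d}\oplus\overline{\mathrm{im}\,d^*}$ holds on any complete manifold and does not by itself let you pass from a closed form in $\overline{\mathrm{im}\,d}$ to an exact form with a primitive vanishing on $X$; closing this requires the strong Hodge decomposition, i.e.\ that $\mathrm{im}\,d$ and $\mathrm{im}\,d^*$ are already closed in the relevant weighted spaces, which in turn needs the Fredholm theory for $d+d^*$ (or $\Delta$) across a range of weights, not just at the $L^2$ weight. This is where most of the real work in \cite{MazzeoHodge} lies, and it cannot be compressed into the final sentence of your outline.
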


We can now apply Theorem \ref{thm: Mazzeo's thm self-adjoint} to $\Delta_A$ and $L_A$:
\begin{thm}
\label{thm: Fredholmness of laplacians}Let $k\in\mathbb{N},k\geq2$
and $\alpha\in\left(0,1\right)$. Then:
\begin{enumerate}
\item for any $\delta\in\left(0,n\right)$,
\begin{align}
\Delta_{A} & :\rho^{\delta}C_{0}^{k,\alpha}\left(\mathfrak{u}\left(\overline{E}\right)\right)\to\rho^{\delta}C_{0}^{k-2,\alpha}\left(\mathfrak{u}\left(\overline{E}\right)\right)\label{eq: Laplacian on u(E)}
\end{align}
is Fredholm of index zero;
\item for any $\delta\in\left(1,n-1\right)$,
\begin{equation}
L_{A}:\rho^{\delta}C_{0}^{k,\alpha}\left(^{0}\Lambda_{\mathfrak{u}\left(\overline{E}\right)}^{1}\right)\to\rho^{\delta}C_{0}^{k-2,\alpha}\left(^{0}\Lambda_{\mathfrak{u}\left(\overline{E}\right)}^{1}\right)\label{eq: YM Laplacian}
\end{equation}
is Fredholm of index zero. Moreover, its kernel coincides with the $L_{0}^{2}$ kernel.
\end{enumerate}
\end{thm}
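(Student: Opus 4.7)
The proof is essentially a verification that the hypotheses of Theorem \ref{thm: Mazzeo's thm self-adjoint} are satisfied by $\Delta_A$ and $L_A$, once one reads off the normal operators and indicial data from Lemma \ref{lem: normal ops of L_A} and Mazzeo's computation of the indicial roots of the Hodge Laplacians on hyperbolic space. The plan is therefore to package these ingredients and invoke Mazzeo's Fredholm theorem at the appropriate weights.

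For $\Delta_A$: the operator is $0$-elliptic of order $2$ and formally self-adjoint, as noted in the text. By Lemma \ref{lem: normal ops of L_A}, $N_p(\Delta_A)$ is an $r^2$-fold direct sum of the Hodge Laplacian $\Delta_0$ on functions over $\overline{\mathbb{H}}^{n+1}$, so in particular the indicial roots are independent of $p\in X$ (verifying Assumption \ref{hyp: constancy indicial roots}). By Mazzeo's lemma, the indicial roots of $\Delta_0$ are $\{0,n\}$, so $(\delta_-,\delta_+)=(0,n)$. Finally, by the Donnelly–Xavier result quoted before this theorem, the $L^2_0$ kernel of $\Delta_0$ on $\overline{\mathbb{H}}^{n+1}$ vanishes (case $k=0<n/2$). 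Theorem \ref{thm: Mazzeo's thm self-adjoint} now yields part (1).

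For $L_A$: again $0$-ellipticity and formal self-adjointness are already established, and Lemma \ref{lem: normal ops of L_A} identifies $N_p(L_A)$ with an $r^2$-fold direct sum of $\Delta_1$ on $0$-$1$-forms over $\overline{\mathbb{H}}^{n+1}$, so the indicial roots are constant and equal to $\{0,1,n-1,n\}$ by Mazzeo's formula. The maximal interval about $n/2$ avoiding these roots is $(1,n-1)$, giving the claimed range of $\delta$. The crucial step is the vanishing of the $L^2_0$ kernel of $\Delta_1$ on $\overline{\mathbb{H}}^{n+1}$: by Donnelly–Xavier this holds provided $1<n/2$, i.e.\ $n\geq 3$, which is exactly the hypothesis $n+1\geq 4$ (and explains the dimension restriction flagged in the introduction). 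Applying Theorem \ref{thm: Mazzeo's thm self-adjoint} gives the Fredholmness of index zero.

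The final clause — that the kernel of \eqref{eq: YM Laplacian} coincides with the $L^2_0$ kernel — is already built into Theorem \ref{thm: Mazzeo's thm self-adjoint}: the theorem asserts that all the kernels on $\rho^{\delta-n/2}L_0^{2,k+2}$ and $\rho^\delta C_0^{k+2,\alpha}$, for $\delta$ varying in $(1,n-1)$, coincide. Choosing $\delta=n/2$ (which lies in $(1,n-1)$ since $n\geq 3$) identifies this common kernel with the $L^2_0$ kernel. No step beyond bookkeeping of hypotheses is required; the only conceptual input beyond what has been recorded is the Donnelly–Xavier vanishing for $\Delta_1$, and it is precisely there that the dimensional assumption $n+1\geq 4$ is used.
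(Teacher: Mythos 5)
Your proof is correct and follows the paper's own route: the paper gives no separate argument for Theorem~\ref{thm: Fredholmness of laplacians}, simply stating that Theorem~\ref{thm: Mazzeo's thm self-adjoint} now applies, and your write-up supplies exactly the verifications (constancy of indicial roots via Lemma~\ref{lem: normal ops of L_A}, identification of $(\delta_-,\delta_+)$ from Mazzeo's indicial-root formula, Donnelly--Xavier vanishing of the $L^2_0$ kernel of the model Hodge Laplacians, and the weight translation $\delta - n/2$ that puts $\delta=n/2$ at the unweighted $L^2_0$ space) that make that application legitimate, including the correct observation that $n+1\geq 4$ is needed so that $1<n/2$ and $(1,n-1)$ is nonempty.
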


\begin{rem}\label{rem: why n+1>=4}The hypothesis $n+1\geq 4$ on the dimension of $\overline{M}$ is necessary to ensure that there is at least one "Fredholm weight" for $L_A$.
\end{rem}

We can say more about $\Delta_{A}$:
\begin{cor}
\label{cor: invertibility of laplacian on functions}For any $k\in\mathbb{N},k\geq2$,
$\alpha\in\left(0,1\right)$, and $\delta\in\left(0,n\right)$, the
map \ref{eq: Laplacian on u(E)} is invertible.
\end{cor}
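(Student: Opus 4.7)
The plan is to invoke Theorem \ref{thm: Fredholmness of laplacians}(1) to reduce invertibility to triviality of the kernel, since the map is already Fredholm of index zero.

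I would first exploit the assertion in Theorem \ref{thm: Mazzeo's thm self-adjoint} that the kernel of $\Delta_A$ is independent of the weight $\delta$ in the Fredholm range $(0,n)$. Therefore any $s$ in the kernel lies in $\rho^{\delta'}C_{0}^{k,\alpha}$ for every $\delta'\in(0,n)$. Choosing $\delta'>n/2$ and using that $\text{dVol}_{g}=\rho^{-(n+1)}\nu$ for some smooth volume form $\nu$ on $\overline{M}$, together with the fact that $\mathcal{V}_{0}$-derivatives of $s$ up to order $k$ are bounded by a constant times $\rho^{\delta'}$, a direct computation yields $s\in L_{0}^{2,2}(\mathfrak{u}(\overline{E}))$; in particular both $s$ and $d_{A}s$ lie in $L_{0}^{2}$.

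Next, I would carry out the integration by parts $\|d_{A}s\|_{L_{0}^{2}}^{2}=\langle\Delta_{A}s,s\rangle_{L_{0}^{2}}=0$ through a cutoff argument on the complete manifold $(M,g)$. Fix $\chi:\mathbb{R}\to[0,1]$ vanishing on $(-\infty,1/2]$ and equal to $1$ on $[1,\infty)$, and set $\chi_{\varepsilon}(p):=\chi(\rho(p)/\varepsilon)$. Since $|d\rho|_{g}=O(\rho)$, one has $|d\chi_{\varepsilon}|_{g}=O(1)$, supported on the shrinking region $\{\rho\sim\varepsilon\}$. Pairing $\Delta_A s=0$ against $\chi_{\varepsilon}^{2}s$, the only subtle term is the cross term $2\int\chi_{\varepsilon}\langle d_{A}s,\,d\chi_{\varepsilon}\otimes s\rangle$, which is bounded by $\|d_{A}s\|_{L_{0}^{2}}\cdot\|d\chi_{\varepsilon}\cdot s\|_{L_{0}^{2}}=O(\varepsilon^{\delta'-n/2})\to 0$ as $\varepsilon\to 0$. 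Thus $d_{A}s=0$.

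To finish, I would note that since $A$ is unitary, parallel transport preserves fibre norms, so $|s|$ is constant along any smooth path in $M$. By Lemma \ref{lem: (the useful lemma)}(2), since $\delta'>0$, $s$ extends continuously to $\overline{M}$ with $s_{|X}\equiv 0$, forcing the constant $|s|$ to vanish and hence $s=0$. The one technical hurdle is the cutoff estimate for the cross term, which is made routine by the freedom to take $\delta'$ arbitrarily close to $n$, supplying as much decay as needed.
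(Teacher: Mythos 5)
Your proof is correct, but it takes a genuinely different route from the paper's. The paper goes directly via the maximum principle at the H\"older level: for $u$ in the $\rho^{\delta}C_0^{k,\alpha}$ kernel, unitarity of $A$ gives $-\tfrac{1}{2}\Delta|u|^2 = |d_A u|^2 \geq 0$, so $|u|^2$ is subharmonic; by Lemma \ref{lem: (the useful lemma)}, $u$ extends continuously to $\overline{M}$ and vanishes on $X$, so the strong maximum principle forces $u\equiv 0$. Your argument instead first invokes the weight-independence of the kernel from Theorem \ref{thm: Mazzeo's thm self-adjoint} to push the weight above $n/2$, places the kernel element in $L_0^{2,2}$, runs a cutoff integration-by-parts to deduce $d_A s = 0$, and then uses that a parallel section with continuous vanishing boundary trace must vanish. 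Both are sound; the paper's Bochner/maximum-principle route is shorter and needs no $L^2$ estimates or cutoffs, while yours is a more ``robust'' scheme that would transfer to situations where the maximum principle is unavailable. Two small remarks: you should note that invoking Theorem \ref{thm: Mazzeo's thm self-adjoint} requires the invertibility of the normal operator $N_p(\Delta_A)$ (a direct sum of copies of the scalar hyperbolic Laplacian $\Delta_0$ on $\overline{\mathbb{H}}^{n+1}$, whose $L^2_0$ kernel vanishes by \cite{DonnellyXavier}), which is exactly what underlies Theorem \ref{thm: Fredholmness of laplacians}(1); and the conclusion that $|s|$ is constant implicitly uses connectedness of $M$, just as the paper's maximum-principle argument does.
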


\begin{proof}
This is a straightforward application of the maximum principle.
Let $\Delta=d^* d$ be the Laplacian on functions, and let $u$ be in the
$\rho^{\delta}C_{0}^{k,\alpha}$ kernel of $\Delta_{A}$. Since $k\geq2$,
$u$ is $C^{2}$ in the interior. Since $A$ is Hermitian, we have
\[
-\frac{1}{2}\Delta\left|u\right|^{2}=\left\langle d_{A}u,d_{A}u\right\rangle \geq0.
\]
It follows that $\left|u\right|^{2}$ cannot have a global maximum
in $M$ unless it is constant. However, since $\delta>0$, Lemma \ref{lem: (the useful lemma)}
implies that $u\in C^{0}\left(\overline{M}\right)$ and vanishes along
$X$. Since $\left|u\right|^{2}$ is continuous on $\overline{M}$,
it has a global maximum, that must be attained at $X$. It follows
that $u\equiv0$.
\end{proof}

\subsection{Gauge fixing\label{subsec:Gauge-fixing}}

Let's choose $k\in\mathbb{N},k\geq3$, $\alpha\in\left(0,1\right)$,
and $\delta\in\left(1,2\right)$. In this subsection we will define:
\begin{enumerate}
\item a Banach manifold $\mathcal{A}^{k,\alpha}\left(X\right)$ of connections
on $\overline{E}_{|X}$;
\item a Banach manifold $\mathcal{A}_{\delta}^{k,\alpha}$ of connections
on $\overline{E}$ whose restriction to the boundary is a well defined
element of $\mathcal{A}^{k,\alpha}\left(X\right)$;
\item a Banach Lie group $\mathcal{G}_{\delta}^{k+1,\alpha}$ of sections
of $\text{U}\left(\overline{E}\right)$ which acts on $\mathcal{A}_{\delta}^{k,\alpha}$
by pullback and preserves the boundary connection;
\item a subset $\mathcal{S}_{A_{0}}\subseteq\mathcal{A}_{\delta}^{k,\alpha}$
which provides a ``local slice'' for the action of $\mathcal{G}_{\delta}^{k+1,\alpha}$
on $\mathcal{A}_{\delta}^{k,\alpha}$ at a smooth connection $A_{0}$ (cf.\ Proposition
\ref{prop: slice} for a more precise statement).
\end{enumerate}
We will follow a strategy similar to the one
outlined in \cite{MazzeoPacard} in the context of Poincaré--Einstein
metrics.

$\mathcal{A}^{k,\alpha}\left(X\right)$ and $\mathcal{G}_{\delta}^{k+1,\alpha}$
are easy to define:
\begin{defn}
We define $\mathcal{A}^{k,\alpha}\left(X\right)=\omega_{0}+C^{k,\alpha}\left(T^{*}X\otimes\mathfrak{u}\left(\overline{E}_{|X}\right)\right)$
where $\omega_{0}$ is a smooth connection on $\overline{E}_{|X}$.
\end{defn}

$\mathcal{A}^{k,\alpha}\left(X\right)$ is an affine space modelled
on $C^{k,\alpha}\left(T^{*}X\otimes\mathfrak{u}\left(\overline{E}_{|X}\right)\right)$,
and its Banach manifold structure does not depend on $\omega_{0}$.
\begin{defn}
We define $\mathcal{G}_{\delta}^{k+1,\alpha}$ to be the group of
continuous sections of $\text{U}\left(\overline{E}\right)$ contained in $1+\rho^{\delta}C_{0}^{k+1,\alpha}\left(\mathfrak{gl}\left(\overline{E}\right)\right)$.
\end{defn}

Here a few comments are necessary. First of all, since $\delta>0$,
Lemma \ref{lem: (the useful lemma)} implies that
\[
\rho^{\delta}C_{0}^{k+1,\alpha}\left(\mathfrak{gl}\left(\overline{E}\right)\right)\hookrightarrow C^{0}\left(\mathfrak{gl}\left(\overline{E}\right)\right)
\]
and that its elements vanish along $X$. Therefore, every gauge transformation
in $\mathcal{G}_{\delta}^{k+1,\alpha}$ restricts to the identity
along $X$. Now, consider the map
\begin{align*}
1+\rho^{\delta}C_{0}^{k+1,\alpha}\left(\mathfrak{gl}\left(\overline{E}\right)\right) & \to\rho^{\delta}C_{0}^{k+1,\alpha}\left(\text{H}\left(\overline{E}\right)\right),\\
\Phi=1+u & \mapsto\Phi^{*}\Phi-1=u^{*}+u+u^{*}u,
\end{align*}
where $\text{H}\left(\overline{E}\right)$ is the real bundle of Hermitian
endomorphisms of $\overline{E}$. $\mathcal{G}_{\delta}^{k+1,\alpha}$
is the zero locus of this map, and Lemma \ref{lem: (the useful lemma)}
implies that this map is well defined and smooth. Moreover, $0$ is
a regular value, so $\mathcal{G}_{\delta}^{k+1,\alpha}$ inherits
from $1+\rho^{\delta}C_{0}^{k+1,\alpha}\left(\mathfrak{gl}\left(\overline{E}\right)\right)$
a manifold structure. Finally, again by Lemma \ref{lem: (the useful lemma)},
the multiplication and inverse maps on $\mathcal{G}_{\delta}^{k+1,\alpha}$
are well defined and smooth, so $\mathcal{G}_{\delta}^{k+1,\alpha}$
is a Banach Lie group.

In order to define our space $\mathcal{A}_{\delta}^{k,\alpha}$ of
connections on $\overline{E}$, we need some more auxiliary data.
Let us fix once and for all a special boundary defining function $\rho$
inducing a metric $h_{0}=\left(\rho^{2}g\right)_{|X}$ on $X$. Recall
from Subsection \ref{subsec:Conformally-compact-manifolds} that $\rho$
induces a collar $X\times[0,\varepsilon)\hookrightarrow\overline{M}$,
where $\varepsilon>0$ is chosen small enough so that $\left|d\rho/\rho\right|_{g}\equiv1$
in the collar. Let us fix a cutoff function $\chi:[0,+\infty)\to\left[0,1\right]$,
equal to $1$ near $0$ and supported in $[0,\varepsilon)$.

Since $\rho^{2}g=d\rho^{2}+h\left(\rho\right)$ on the collar, the
restriction of the bundle $\Lambda^{1}$ to the boundary $X$ splits
$\rho^{2}g$-orthogonally as
\[
\Lambda_{|X}^{1}=\text{span}\left(d\rho_{|X}\right)\oplus T^{*}X.
\]
On the collar induced by $\rho$, we can always decompose uniquely
a smooth section $a$ of $\Lambda^{1}$ as
\[
a=a_{t}\left(\rho\right)d\rho+a_{s}\left(\rho\right),
\]
where $a_{t}\left(\rho\right)$ is interpreted as a smooth path $[0,\varepsilon)\to C^{\infty}\left(X\right)$,
and $a_{s}\left(\rho\right)$ as a smooth path \mbox{$[0,\varepsilon)\to\Omega^{1}\left(X\right)$}.

Now, choose a reference smooth connection $A_0$. The parallel transport of $A_{0}$ along the integral curves of $\text{grad}_{\rho^{2}g}\rho$
induces bundle isometries $\overline{E}_{|X}\to\overline{E}_{|\rho^{-1} \left(\lambda\right)}$ for every $\lambda \in \left(0,\varepsilon\right)$.
Using this identification, we can always decompose a section $a$
of $\Lambda_{\mathfrak{u}\left(\overline{E}\right)}^{1}$ on the collar
as
\[
a=a_{t}\left(\rho\right)d\rho+a_{s}\left(\rho\right),
\]
where now $a_{t}:[0,\varepsilon)\to C^{\infty}\left(\mathfrak{u}\left(\overline{E}_{|X}\right)\right)$
and $a_{s}:[0,\varepsilon)\to C^{\infty}\left(T^{*}X\otimes\mathfrak{u}\left(\overline{E}_{|X}\right)\right)$.

Keeping in mind those interpretations, we can define a linear bounded
extension map
\begin{align*}
e:C^{k,\alpha}\left(T^{*}X_{\mathfrak{u}\left(\overline{E}_{|X}\right)}\right) & \to C^{k,\alpha}\left(\Lambda_{\mathfrak{u}\left(\overline{E}\right)}^{1}\right).\\
\gamma & \mapsto\chi\left(\rho\right)\gamma
\end{align*}
We are finally ready to define $\mathcal{A}_{\delta}^{k,\alpha}$.
\begin{defn}
\label{def: space of connections}We define $\mathcal{A}_{\delta}^{k,\alpha}$
as the space of connections of the form $A_{0}+e\left(\gamma\right)+a$,
where 
\begin{align*}
\gamma & \in C^{k,\alpha}\left(T^{*}X_{\mathfrak{u}\left(\overline{E}_{|X}\right)}\right)\\
a & \in\rho^{\delta}C_{0}^{k,\alpha}\left({^{0}\Lambda^{1}}_{\mathfrak{u}\left(\overline{E}\right)}\right).
\end{align*}
\end{defn}

Again, a few remarks are necessary. First of all, by Lemma \ref{lem: (the useful lemma)}
and since $\delta>1$, we have bounded inclusions
\begin{align*}
C^{k,\alpha}\left(\Lambda_{\mathfrak{u}\left(\overline{E}\right)}^{1}\right) & \hookrightarrow\rho C_{0}^{k,\alpha}\left({^{0}\Lambda^{1}}_{\mathfrak{u}\left(\overline{E}\right)}\right) \\
\rho^\delta C_{0}^{k,\alpha}\left({^{0}\Lambda^{1}}_{\mathfrak{u}\left(\overline{E}\right)}\right) & \hookrightarrow\rho C_{0}^{k,\alpha}\left({^{0}\Lambda^{1}}_{\mathfrak{u}\left(\overline{E}\right)}\right)
,
\end{align*}
so the sum $e\left(\gamma\right)+a$ makes sense inside the ambient
space $\rho C_{0}^{k,\alpha}$.

Now, again by Lemma\textbf{ }\ref{lem: (the useful lemma)},
we have
\[
\rho^{\delta}C_{0}^{k,\alpha}\left(^{0}\Lambda_{\mathfrak{u}\left(\overline{E}\right)}^{1}\right)=\rho^{\delta-1}C_{0}^{k,\alpha}\left(\Lambda_{\mathfrak{u}\left(\overline{E}\right)}^{1}\right)\hookrightarrow C^{0}\left(\Lambda_{\mathfrak{u}\left(\overline{E}\right)}^{1}\right)
\]
(here the metric on $\Lambda^{1}$ is the one induced by $\rho^{2}g$) and its\textbf{ }elements
vanish along $X$. Therefore, for every $A=A_{0}+e\left(\gamma\right)+a\in\mathcal{A}_{\delta}^{k,\alpha}$,
we have a well defined restriction $A_{|X}$ and
\[
A_{|X}=A_{0|X}+\gamma\in\mathcal{A}^{k,\alpha}\left(X\right).
\]
From this it follows also that, if $A,A'\in\mathcal{A}_{\delta}^{k,\alpha}$,
say $A=A_{0}+e\left(\gamma\right)+a$ and $A'=A_{0}+e\left(\gamma'\right)+a'$,
then
\[
A=A'\iff\gamma=\gamma'\,\text{and}\,a=a'.
\]
In other words, $\mathcal{A}_{\delta}^{k,\alpha}$ is modelled on
the direct sum
\[
C^{k,\alpha}\left(T^{*}X_{\mathfrak{u}\left(\overline{E}\right)}\right)\oplus\rho^{\delta}C_{0}^{k,\alpha}\left(^{0}\Lambda_{\mathfrak{u}\left(\overline{E}\right)}^{1}\right).
\]

Finally, $\mathcal{A}_{\delta}^{k,\alpha}$ \emph{does} depend, even
as a set, on the choice of the reference smooth connection $A_{0}$.
In fact, $\mathcal{A}_{\delta}^{k,\alpha}$ does not contain
every smooth connection. More precisely, if $A$ is another smooth
connection, then $\left(A-A_{0}\right)_{|X}$ is a smooth section
of $\Lambda_{|X}^{1}=\text{span}\left(d\rho_{|X}\right)\oplus T^{*}X$
with values in $\mathfrak{u}\left(\overline{E}_{|X}\right)$, and
$A\in\mathcal{A}_{\delta}^{k,\alpha}$ if and only if its $d\rho_{|X}$ part
vanishes along $X$.

\begin{lem}
\label{lem: smoothness of action}The pullback action of $\mathcal{G}_{\delta}^{k+1,\alpha}$
on $\mathcal{A}_{\delta}^{k,\alpha}$ is well defined and smooth,
and preserves the boundary map $\mathcal{A}_{\delta}^{k,\alpha}\to\mathcal{A}^{k,\alpha}\left(X\right)$,
$A\mapsto A_{|X}$.
\end{lem}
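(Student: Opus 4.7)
The plan is to use the explicit formula for the gauge action and reduce everything to the multiplication and inclusion properties of Lemma~\ref{lem: (the useful lemma)}. Writing $\Phi = 1 + u$ with $u \in \rho^{\delta} C_{0}^{k+1,\alpha}(\mathfrak{gl}(\overline{E}))$, a direct computation in a smooth local frame of $\overline{E}$ gives the key identity
\[
\Phi^{*} A - A = \Phi^{-1} d_{A} u,
\]
where $d_{A}$ denotes the covariant derivative on sections of $\mathfrak{gl}(\overline{E})$ induced by $A$. Since $\Phi$ is unitary, $\Phi^{-1} = \Phi^{*} = 1 + u^{*}$ with $u^{*} \in \rho^{\delta} C_{0}^{k+1,\alpha}(\mathfrak{gl}(\overline{E}))$, so the action is a polynomial of low degree in $u$, $u^{*}$, and $A - A_{0}$. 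This is what renders smoothness essentially automatic; the main content is to show that the right-hand side lives in $\rho^{\delta} C_{0}^{k,\alpha}({}^{0}\Lambda^{1}_{\mathfrak{u}(\overline{E})})$.

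For this, I decompose $d_{A} u = d_{A_{0}} u + [e(\gamma) \wedge u] + [a \wedge u]$, using $A = A_{0} + e(\gamma) + a$. The operator $d_{A_{0}}$ is a first order $0$-differential operator with smooth coefficients, so $d_{A_{0}} u \in \rho^{\delta} C_{0}^{k,\alpha}({}^{0}\Lambda^{1})$ by the boundedness of $0$-differential operators on weighted $0$-Hölder spaces. By parts (1) and (4) of Lemma~\ref{lem: (the useful lemma)}, $e(\gamma) \in C^{k,\alpha}(\Lambda^{1}) \hookrightarrow \rho C_{0}^{k,\alpha}({}^{0}\Lambda^{1})$, so the commutator $[e(\gamma) \wedge u]$ lies in $\rho^{1+\delta} C_{0}^{k,\alpha}$, and likewise $[a \wedge u] \in \rho^{2\delta} C_{0}^{k,\alpha}$. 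Since $\delta > 1$, both are contained in $\rho^{\delta} C_{0}^{k,\alpha}({}^{0}\Lambda^{1})$. Multiplying by $1 + u^{*}$ produces at worst an extra $\rho^{\delta}$-weighted summand, so altogether $\Phi^{-1} d_{A} u \in \rho^{\delta} C_{0}^{k,\alpha}({}^{0}\Lambda^{1}_{\mathfrak{u}(\overline{E})})$.

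The remaining claims follow at once. Writing $A = A_{0} + e(\gamma) + a$, we obtain $\Phi^{*} A = A_{0} + e(\gamma) + (a + \Phi^{-1} d_{A} u)$, with the last summand in $\rho^{\delta} C_{0}^{k,\alpha}$; so $\Phi^{*} A \in \mathcal{A}_{\delta}^{k,\alpha}$ with its $\gamma$-component unchanged. In particular $(\Phi^{*} A)_{|X} = A_{0|X} + \gamma = A_{|X}$, because $\rho^{\delta} C_{0}^{k,\alpha}({}^{0}\Lambda^{1}) \hookrightarrow C^{0}(\Lambda^{1})$ with vanishing boundary trace as soon as $\delta > 1$. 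Smoothness of the action reduces to the smoothness of the continuous (multi)linear operations used above — linear combination, the bundle commutator, the application of $d_{A_{0}}$, and the involution $u \mapsto u^{*}$ — and there is no genuine obstacle, since bounded multilinear maps between Banach spaces are automatically smooth. The only point where one would ordinarily expect trouble is in inverting $\Phi$, but unitarity sidesteps any Neumann series argument here.
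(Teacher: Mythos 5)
Your proof is correct and takes essentially the same route as the paper: both rest on the explicit formula $A\cdot\Phi = A + (1+u^{*})\bigl(d_{A_{0}}u + [\tilde a\wedge u]\bigr)$, then invoke the multiplication and inclusion properties of Lemma~\ref{lem: (the useful lemma)} to place the correction term in $\rho^{\delta}C_{0}^{k,\alpha}({}^{0}\Lambda^{1}_{\mathfrak{u}(\overline{E})})$, and use the embedding $\rho^{\delta}C_{0}^{k,\alpha}({}^{0}\Lambda^{1})\hookrightarrow C^{0}(\Lambda^{1})$ with vanishing trace for $\delta>1$ to see that the boundary connection is unchanged. The only cosmetic difference is that you split $[\tilde a\wedge u]$ into $[e(\gamma)\wedge u]+[a\wedge u]$ before estimating, whereas the paper bounds $\tilde a$ as a whole in $\rho C_{0}^{k,\alpha}$.
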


\begin{proof}
Let $\Phi\in\mathcal{G}_{\delta}^{k+1,\alpha}$ and $A\in\mathcal{A}_{\delta}^{k,\alpha}$.
We can write $\Phi=1+u$ for some $u\in\rho^{\delta}C_{0}^{k+1,\alpha}\left(\mathfrak{gl}\left(\overline{E}\right)\right)$,
and $A=A_{0}+e\left(\gamma\right)+a$ as in Definition \ref{def: space of connections}.
Write $\tilde{a}=e\left(\gamma\right)+a$. The pullback of $A$ by
$\Phi$ is 
\begin{align}
A\cdot\Phi & =A+\Phi^{*}d_{A}\Phi\nonumber \\
 & =A_{0}+e\left(\gamma\right)\label{eq: gauge in coordinates}\\
 & +a+\left(1+u^{*}\right)\left(d_{A_{0}}u+\left[\tilde{a}\land u\right]\right).\nonumber 
\end{align}
The discussion after Definition \ref{def: space of connections} shows
that $\tilde{a}=\tilde{a}\left(\gamma,a\right)$ is bilinear continuous
(hence smooth) as a map with values in $\rho C_{0}^{k,\alpha}\left(^{0}\Lambda^{1}\otimes\mathfrak{u}\left(\overline{E}\right)\right)$.
Therefore, by Lemma \ref{lem: (the useful lemma)}, the last row in
the formula above is a $\rho^{\delta}C_{0}^{k,\alpha}\left(^{0}\Lambda^{1}\otimes\mathfrak{u}\left(\overline{E}\right)\right)$-valued
smooth function of $\gamma,a$. This shows that the pullback map is
well defined and smooth. Moreover, we already noticed that 
\[
\rho^{\delta}C_{0}^{k,\alpha}\left(^{0}\Lambda_{\mathfrak{u}\left(\overline{E}\right)}^{1}\right)\hookrightarrow C^{0}\left(\Lambda_{\mathfrak{u}\left(\overline{E}\right)}^{1}\right)
\]
and its elements vanish along the boundary. Therefore, the $1$-form
in the last row of \ref{eq: gauge in coordinates} vanishes along
$X$, and hence $\left(A\cdot\Phi\right)_{|X}=A_{|X}=A_{0|X}+\gamma$.
\end{proof}
Now we will define a ``slice'' for the action at the point $A_{0}$.
Define
\begin{equation}
\mathcal{S}_{A_{0}}=\left\{ A_{0}+e\left(\gamma\right)+a\in\mathcal{A}_{\delta}^{k,\alpha}:d_{A_{0}+e\left(\gamma\right)}^{*}a=0\right\} .\label{eq: slice}
\end{equation}
In other words, $A\in\mathcal{S}_{A_{0}}$ \emph{if and only if it
is in Coulomb gauge with $A_{0}+e\left(\left(A-A_{0}\right)_{|X}\right)$}.
Endow $\mathcal{S}_{A_{0}}$ with the topology induced by $\mathcal{A}_{\delta}^{k,\alpha}$.
\begin{lem}
Near $A_{0}$, $\mathcal{S}_{A_{0}}$ is a smooth Banach submanifold
of $\mathcal{A}_{\delta}^{k,\alpha}$, with tangent space at $A_{0}$
given by
\[
T_{A_{0}}\left(\mathcal{S}_{A_{0}}\right)=C^{k,\alpha}\left(T^{*}X_{\mathfrak{u}\left(\overline{E}_{|X}\right)}\right)\oplus\ker_{\rho^{\delta}C_{0}^{k,\alpha}\left({^{0}\Lambda_{\mathfrak{u}\left(E\right)}^{1}}\right)}d_{A_{0}}^{*}.
\]
\end{lem}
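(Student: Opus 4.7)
The plan is to apply the implicit function theorem for Banach manifolds, realizing $\mathcal{S}_{A_{0}}$ as the zero set of a smooth submersion. Define
\[
\Psi:\mathcal{A}_{\delta}^{k,\alpha}\to\rho^{\delta}C_{0}^{k-1,\alpha}\left(\mathfrak{u}\left(\overline{E}\right)\right),\qquad\Psi\left(A_{0}+e\left(\gamma\right)+a\right)=d_{A_{0}+e\left(\gamma\right)}^{*}a,
\]
so that $\mathcal{S}_{A_{0}}=\Psi^{-1}\left(0\right)$. To check that $\Psi$ is well defined and smooth, I would split it as $\Psi\left(\gamma,a\right)=d_{A_{0}}^{*}a+B\left(\gamma,a\right)$, where $B\left(\gamma,a\right)=\pm\, e\left(\gamma\right)\contr a$ (up to the usual sign from the codifferential formula) is a bounded bilinear map. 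Using Lemma \ref{lem: (the useful lemma)} together with the inclusion $C^{k,\alpha}\left(\Lambda^{1}_{\mathfrak{u}\left(\overline{E}\right)}\right)\hookrightarrow\rho C_{0}^{k,\alpha}\left({^{0}\Lambda^{1}}_{\mathfrak{u}\left(\overline{E}\right)}\right)$, the bilinear term lands in $\rho^{\delta+1}C_{0}^{k,\alpha}\hookrightarrow\rho^{\delta}C_{0}^{k-1,\alpha}$, so $\Psi$ is in fact a polynomial of degree two between the ambient Banach spaces, and hence $C^{\infty}$.

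Linearizing at $A_{0}$ (where $\gamma=0$, $a=0$) in a direction $\left(\eta,b\right)\in C^{k,\alpha}\left(T^{*}X_{\mathfrak{u}\left(\overline{E}_{|X}\right)}\right)\oplus\rho^{\delta}C_{0}^{k,\alpha}\left({^{0}\Lambda^{1}}_{\mathfrak{u}\left(\overline{E}\right)}\right)$, one has
\[
D\Psi_{A_{0}}\left(\eta,b\right)=\tfrac{d}{dt}\Big|_{t=0}\left(t\,d_{A_{0}}^{*}b+t^{2}B\left(\eta,b\right)\right)=d_{A_{0}}^{*}b,
\]
because the bilinear remainder vanishes to second order at the origin. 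Consequently, the kernel of $D\Psi_{A_{0}}$ is precisely the space $C^{k,\alpha}\left(T^{*}X_{\mathfrak{u}\left(\overline{E}_{|X}\right)}\right)\oplus\ker_{\rho^{\delta}C_{0}^{k,\alpha}\left({^{0}\Lambda_{\mathfrak{u}\left(E\right)}^{1}}\right)}d_{A_{0}}^{*}$ claimed in the statement.

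The main step, and the one where the geometric input enters, is to show that $d_{A_{0}}^{*}:\rho^{\delta}C_{0}^{k,\alpha}\left({^{0}\Lambda^{1}}_{\mathfrak{u}\left(\overline{E}\right)}\right)\to\rho^{\delta}C_{0}^{k-1,\alpha}\left(\mathfrak{u}\left(\overline{E}\right)\right)$ is surjective with complemented kernel. Here I would use Corollary \ref{cor: invertibility of laplacian on functions}: since $\delta\in\left(1,2\right)\subset\left(0,n\right)$ and $k+1\geq 2$, the operator $\Delta_{A_{0}}:\rho^{\delta}C_{0}^{k+1,\alpha}\left(\mathfrak{u}\left(\overline{E}\right)\right)\to\rho^{\delta}C_{0}^{k-1,\alpha}\left(\mathfrak{u}\left(\overline{E}\right)\right)$ is a Banach space isomorphism. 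Given $v\in\rho^{\delta}C_{0}^{k-1,\alpha}$, set $u=\Delta_{A_{0}}^{-1}v$ and $b=d_{A_{0}}u\in\rho^{\delta}C_{0}^{k,\alpha}$; then $d_{A_{0}}^{*}b=\Delta_{A_{0}}u=v$, giving surjectivity. For the splitting, apply the same recipe to an arbitrary $b\in\rho^{\delta}C_{0}^{k,\alpha}\left({^{0}\Lambda^{1}}_{\mathfrak{u}\left(\overline{E}\right)}\right)$: with $u=\Delta_{A_{0}}^{-1}\left(d_{A_{0}}^{*}b\right)$ one obtains a continuous Hodge-type decomposition $b=\left(b-d_{A_{0}}u\right)+d_{A_{0}}u$ with first summand in $\ker d_{A_{0}}^{*}$; the two summands intersect trivially because $d_{A_{0}}^{*}d_{A_{0}}u=0$ forces $u=0$ via the injectivity of $\Delta_{A_{0}}$, and continuity is immediate from the bounded inverse.

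With $D\Psi_{A_{0}}$ surjective and its kernel topologically complemented, the implicit function theorem for Banach manifolds yields that $\Psi^{-1}\left(0\right)=\mathcal{S}_{A_{0}}$ is a smooth Banach submanifold of $\mathcal{A}_{\delta}^{k,\alpha}$ in a neighborhood of $A_{0}$, whose tangent space there coincides with $\ker D\Psi_{A_{0}}$, exactly as asserted.
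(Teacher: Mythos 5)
Your proof follows essentially the same route as the paper's: realize $\mathcal{S}_{A_0}$ as the zero set of the smooth map $(\gamma,a)\mapsto d_{A_0+e(\gamma)}^*a$, compute the linearization at $A_0$ to be $(\gamma,a)\mapsto d_{A_0}^*a$, and use Corollary \ref{cor: invertibility of laplacian on functions} to get surjectivity of $d_{A_0}^*$, concluding via the implicit/inverse function theorem. The one thing you add beyond what the paper writes out is the explicit Hodge-type decomposition exhibiting a topological complement of $\ker D\Psi_{A_0}$; the paper leaves this implicit, but it is a hypothesis of the Banach regular value theorem, so spelling it out is a small improvement rather than a divergence.
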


\begin{proof}
$\mathcal{S}_{A_{0}}$ is the zero locus of the map
\begin{align*}
\mathcal{A}_{\delta}^{k,\alpha} & \to\rho^{\delta}C_{0}^{k-1,\alpha}\left(\mathfrak{u}\left(\overline{E}\right)\right)\\
A_{0}+e\left(\gamma\right)+a & \mapsto d_{A_{0}+e\left(\gamma\right)}^{*}a.
\end{align*}
This map is well defined and smooth because $d_{A_0}$ is a $0$-differential operator,
\[
e:C^{k,\alpha}\left(T^{*}X_{\mathfrak{u}\left(\overline{E}_{|X}\right)}\right)\to\rho C_{0}^{k,\alpha}\left(^{0}\Lambda_{\mathfrak{u}\left(\overline{E}\right)}^{1}\right)
\]
is bounded, and $\star\left[\cdot\land\star\cdot\right]:\rho C_{0}^{k,\alpha}\times\rho^{\delta}C_{0}^{k,\alpha}\to\rho^{\delta}C_{0}^{k,\alpha}$
is smooth by Lemma \ref{lem: (the useful lemma)}. The derivative
of this map at $A_{0}$ is
\begin{align*}
T_{A_{0}}\mathcal{A}_{\delta}^{k,\alpha} & \to\rho^{\delta}C_{0}^{k-1,\alpha}\left(\mathfrak{u}\left(\overline{E}\right)\right)\\
\left(\gamma,a\right) & \mapsto d_{A_{0}}^{*}a.
\end{align*}
Now, we know from Corollary \ref{cor: invertibility of laplacian on functions}
that
\[
d_{A_{0}}^{*}:\rho^{\delta}C_{0}^{k,\alpha}\left(^{0}\Lambda_{\mathfrak{u}\left(\overline{E}\right)}^{1}\right)\to\rho^{\delta}C_{0}^{k-1,\alpha}\left(\mathfrak{u}\left(\overline{E}\right)\right)
\]
is surjective. The claim then follows from the inverse function theorem.
\end{proof}
We are finally ready to prove that $\mathcal{S}_{A_{0}}$ indeed provides
a ``local slice'' for the action at $A_{0}$.
\begin{prop}
\label{prop: slice}There are opens
\begin{align*}
\mathcal{U}_{A_{0}}\subseteq\mathcal{S}_{A_{0}} & , A_{0}\in\mathcal{U}_{A_{0}} \\
\mathcal{V}_{1}\subseteq\mathcal{G}_{\delta}^{k+1,\alpha} & , 1\in\mathcal{V}_{1} \\
\mathcal{W}_{A_{0}}\subseteq\mathcal{A}_{\delta}^{k,\alpha} & , A_{0}\in\mathcal{W}_{A_{0}}
\end{align*}
with the following property: for every $A\in\mathcal{W}_{A_{0}}$,
there is a unique $\Phi\in\mathcal{V}_{1}$ such that $A\cdot\Phi\in\mathcal{U}_{A_{0}}$.
\end{prop}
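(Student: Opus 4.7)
The plan is to apply the inverse function theorem to a smooth map that encodes the Coulomb gauge condition. We define
\begin{align*}
\Psi: \mathcal{A}_{\delta}^{k,\alpha} \times \mathcal{G}_{\delta}^{k+1,\alpha} & \to \rho^{\delta} C_{0}^{k-1,\alpha}\left(\mathfrak{u}\left(\overline{E}\right)\right), \\
(A, \Phi) & \mapsto d_{A_{0}+e(\gamma(A))}^{*}\left(A\cdot\Phi - A_{0} - e(\gamma(A))\right),
\end{align*}
where $\gamma(A) := (A - A_{0})_{|X}$. Since $A\cdot \Phi$ and $A$ have the same boundary value by Lemma \ref{lem: smoothness of action}, the argument of $d^{*}$ lies in $\rho^{\delta} C_{0}^{k,\alpha}\left(^{0}\Lambda_{\mathfrak{u}(\overline{E})}^{1}\right)$, so $\Psi$ is well defined, and $\Psi(A, \Phi) = 0$ is exactly equivalent to $A \cdot \Phi \in \mathcal{S}_{A_{0}}$. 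Smoothness of $\Psi$ follows from Lemma \ref{lem: smoothness of action}, linearity of the projection $A \mapsto \gamma(A)$, and the fact that $d^{*}_{A_{0} + e(\gamma)}$ differs from $d^{*}_{A_{0}}$ by a bilinear pointwise contraction with $e(\gamma) \in \rho C_{0}^{k,\alpha}$, whose smoothness on the relevant weighted spaces is guaranteed by Lemma \ref{lem: (the useful lemma)}.

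Next, we compute the partial derivative $D_{\Phi}\Psi$ at $(A_{0}, 1)$. Differentiating the constraint $\Phi^{*}\Phi = 1$ identifies the tangent space to $\mathcal{G}_{\delta}^{k+1,\alpha}$ at $1$ with $\rho^{\delta}C_{0}^{k+1,\alpha}\left(\mathfrak{u}(\overline{E})\right)$. Setting $A = A_{0}$ (so $\gamma(A_{0}) = 0$) and expanding $\Phi^{*}d_{A_{0}}\Phi$ for $\Phi = 1 + tv + O(t^{2})$ with $v$ in this tangent space gives $t\,d_{A_{0}} v + O(t^{2})$, so
\[
D_{\Phi}\Psi_{(A_{0}, 1)}(v) = d_{A_{0}}^{*} d_{A_{0}} v = \Delta_{A_{0}} v.
\]
By Corollary \ref{cor: invertibility of laplacian on functions} applied at regularity index $k+1$ (allowed since $k+1 \geq 4 \geq 2$ and $\delta \in (1,2) \subseteq (0,n)$), the operator $\Delta_{A_{0}}: \rho^{\delta}C_{0}^{k+1,\alpha}\left(\mathfrak{u}(\overline{E})\right) \to \rho^{\delta}C_{0}^{k-1,\alpha}\left(\mathfrak{u}(\overline{E})\right)$ is an isomorphism of Banach spaces.

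The inverse function theorem then yields open neighborhoods $\mathcal{W}_{A_{0}}$ of $A_{0}$ in $\mathcal{A}_{\delta}^{k,\alpha}$ and $\mathcal{V}_{1}$ of $1$ in $\mathcal{G}_{\delta}^{k+1,\alpha}$, together with a smooth map $A \mapsto \Phi(A)$ on $\mathcal{W}_{A_{0}}$ satisfying $\Phi(A_{0}) = 1$ and uniquely characterized inside $\mathcal{V}_{1}$ by $\Psi(A, \Phi(A)) = 0$. Setting $\mathcal{U}_{A_{0}} := \mathcal{S}_{A_{0}} \cap \mathcal{W}_{A_{0}}$ (and shrinking $\mathcal{W}_{A_{0}}$ so that the continuous map $A \mapsto A\cdot \Phi(A)$ sends it into itself, which is possible since at $A = A_{0}$ one has $\Phi(A_{0}) = 1$) produces the desired triple. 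The main technical obstacle we expect is the verification that $\Psi$ is genuinely smooth between the specified Banach spaces: this requires careful bookkeeping showing that each bilinear operation involved, including the $\gamma$-dependence of $d^{*}_{A_{0} + e(\gamma)}$, respects or improves the weight $\delta$, which ultimately reduces to patient applications of Lemma \ref{lem: (the useful lemma)}.
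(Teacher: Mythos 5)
Your proof is correct and is essentially the standard ``constraint map'' variant of the slice theorem, whereas the paper applies the inverse function theorem directly to the restricted action map
\[
\mathcal{S}_{A_{0}}\times\mathcal{G}_{\delta}^{k+1,\alpha}\to\mathcal{A}_{\delta}^{k,\alpha},\qquad (B,\Phi)\mapsto B\cdot\Phi,
\]
whose derivative at $(A_{0},1)$ is $(\gamma,a,u)\mapsto(\gamma,d_{A_{0}}u+a)$; its invertibility reduces (by the same Corollary~\ref{cor: invertibility of laplacian on functions}) to surjectivity and injectivity of $\Delta_{A_{0}}$ at one notch higher regularity, exactly as in your computation of $D_{\Phi}\Psi_{(A_{0},1)}=\Delta_{A_{0}}$. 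Both routes consume precisely the same analytic input, so the proofs are equivalent in substance; the main cosmetic difference is that the paper's approach explicitly uses the Banach-submanifold structure of $\mathcal{S}_{A_{0}}$ from the preceding lemma, while your implicit-function-theorem formulation treats $\mathcal{S}_{A_{0}}$ purely as a zero set and hence could in principle be run without that lemma. Two small remarks: (i) what you invoke in the last paragraph is the \emph{implicit} function theorem, not the inverse function theorem --- the derivative is only shown invertible in the $\Phi$-direction; (ii) when you ``shrink $\mathcal{W}_{A_{0}}$'' so that $A\mapsto A\cdot\Phi(A)$ lands in $\mathcal{W}_{A_{0}}$, be careful to keep $\mathcal{U}_{A_{0}}=\mathcal{S}_{A_{0}}\cap\mathcal{W}_{A_{0}}$ defined with respect to the \emph{original} (larger) $\mathcal{W}_{A_{0}}$, and take as the domain in the proposition the shrunken neighborhood; otherwise the uniqueness clause becomes circular. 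With these adjustments the argument is complete.
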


\begin{proof}
Restrict the action map to
\begin{align*}
\mathcal{S}_{A_{0}}\times\mathcal{G}_{\delta}^{k+1,\alpha} & \to\mathcal{A}_{\delta}^{k,\alpha}\\
\left(B,\Phi\right) & \mapsto B\cdot\Phi.
\end{align*}
This map is smooth near $\left(A_{0},1\right)$, and its derivative
at that point is is
\begin{align*}
T_{A_{0}}\left(\mathcal{S}_{A_{0}}\right)\oplus T_{1}\left(\mathcal{G}_{\delta}^{k+1,\alpha}\right) & \to T_{A_{0}}\left(\mathcal{A}_{\delta}^{k,\alpha}\right)\\
\left(\gamma,a,u\right) & \mapsto\left(\gamma,d_{A_{0}}u+a\right).
\end{align*}
$\gamma$ is just a passenger here: to conclude by the inverse function
theorem, it is sufficient to prove that the map
\begin{align*}
\ker_{\rho^{\delta}C_{0}^{k,\alpha}\left({^{0}\Lambda_{\mathfrak{u}\left(E\right)}^{1}}\right)}d_{A_{0}}^{*}\oplus\rho^{\delta}C_{0}^{k+1,\alpha}\left(\mathfrak{u}\left(\overline{E}\right)\right) & \to\rho^{\delta}C_{0}^{k,\alpha}\left({^{0}\Lambda_{\mathfrak{u}\left(\overline{E}\right)}^{1}}\right)\\
\left(a,u\right) & \mapsto d_{A_{0}}u+a
\end{align*}
is invertible.

\emph{\underline{Injectivity.}}\textbf{ }Suppose that $d_{A_{0}}u+a=0$.
If we apply $d_{A_{0}}^{*}$ again, we get $\Delta_{A_{0}}u=0$, and
hence Corollary \ref{cor: invertibility of laplacian on functions}
implies that $u=0$; therefore, $a=0$ as well.

\emph{\underline{Surjectivity.}}\textbf{ }Let $b\in\rho^{\delta}C_{0}^{k,\alpha}\left(^{0}\Lambda^1 \otimes {\mathfrak{u}\left(\overline{E}\right)}\right)$.
Corollary \ref{cor: invertibility of laplacian on functions} ensures
that there is a unique solution $u$ in $\rho^{\delta}C_{0}^{k+1,\alpha}\left(\mathfrak{u}\left(\overline{E}\right)\right)$
of the equation $\Delta_{A_{0}}u=d_{A_{0}}^{*}b$. Define $a=b-d_{A_{0}}u$:
then $a$ is in \mbox{$\rho^{\delta}C_{0}^{k,\alpha}\left(^{0}\Lambda^1 \otimes {\mathfrak{u}\left(\overline{E}\right)}\right)$}
and $d_{A_{0}}^{*}a=d_{A_{0}}^{*}b-\Delta_{A_{0}}u=0$, so $\left(a,u\right)$
is in the domain of the map above.
\end{proof}

\subsection{A boundary value problem\label{subsec: a bvp}}

Let $A_{0}$ be a smooth Yang--Mills connection. We use $A_{0}$
as our reference connection to define the space $\mathcal{A}_{\delta}^{k,\alpha}$,
as in Subsection \ref{subsec:Gauge-fixing}. In this subsection we
will try and find solutions to the following gauge-fixed boundary
value problem for Yang--Mills connections
\begin{equation}
\begin{cases}
d_{A}^{*}F_{A}=0 & A\in\mathcal{A}_{\delta}^{k,\alpha}\\
A\in\mathcal{S}_{A_{0}}\\
A_{|X}=\omega & \omega\in\mathcal{A}^{k,\alpha}\left(X\right)
\end{cases}\label{eq: gauge-fixed bvp}
\end{equation}
for $A$ sufficiently close to $A_{0}$ and $\omega$ sufficiently close to $A_{0|X}$.

Consider the nonlinear map
\begin{align*}
\mathcal{Y}_{A_{0}}:\mathcal{A}_{\delta}^{k,\alpha} & \to\rho^{\delta}C_{0}^{k-2,\alpha}\left(^{0}\Lambda_{\mathfrak{u}\left(\overline{E}\right)}^{1}\right)\\
A=A_{0}+e\left(\gamma\right)+a & \mapsto d_{A}^{*}F_{A}+d_{A}d_{A_{0}+e\left(\gamma\right)}^{*}a.
\end{align*}

\begin{lem}
\label{lem: smoothness of Y_A_0}The map $\mathcal{Y}_{A_{0}}$ is
well defined and smooth.
\end{lem}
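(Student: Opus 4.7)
The plan is to expand $\mathcal{Y}_{A_0}(A)$ as an explicit polynomial expression in the two parameters $\gamma$ and $a$, and then verify term by term that every summand lies in $\rho^\delta C_0^{k-2,\alpha}({}^0\Lambda^1_{\mathfrak{u}(\overline E)})$. Smoothness will then be automatic: each summand is a composition of $0$-differential operators of order at most $2$ (bounded between the relevant weighted $C_0^{k,\alpha}$-spaces) with bilinear bracket and contraction operations, which are continuous in the weighted scales by Lemma~\ref{lem: (the useful lemma)}. Hence $\mathcal{Y}_{A_0}$ is polynomial in $(\gamma,a)$, and in particular smooth.

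Concretely, set $B := A_0 + e(\gamma)$, so that $A = B + a$. Using the identities
\[
F_A = F_B + d_B a + \tfrac{1}{2}[a\wedge a], \quad d_A^* = d_B^* + [a^*\contr \cdot\,], \quad d_A d_B^* a = d_B d_B^* a + [a \wedge d_B^* a],
\]
I would expand $\mathcal{Y}_{A_0}(A)$ into the sum of a purely $\gamma$-dependent term $d_B^* F_B$, the linear-in-$a$ operator $(d_B^* d_B + d_B d_B^*) a + [a^*\contr F_B]$ (which is essentially $L_B a$ after the gauge-fixing correction), and explicit polynomial corrections of degree $\geq 2$ in $a$. Every term containing at least one factor of $a$ lands automatically in $\rho^\delta C_0^{k-2,\alpha}$: the operators $d_B, d_B^*$ are $0$-differential and hence preserve the weight $\delta$, while the brackets and contractions are bilinear continuous with weights \emph{adding}, so each such product lies in $\rho^\delta$ or better.

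The delicate term is the $a$-independent piece $d_B^* F_B$. The key observation is that $B = A_0 + e(\gamma)$ is a \emph{classical} $C^{k,\alpha}$ connection, since $A_0$ is smooth and $e(\gamma)$ is a classical $C^{k,\alpha}$ $1$-form; therefore $F_B$ is a classical $C^{k-1,\alpha}$ $2$-form. By Lemma~\ref{lem: (the useful lemma)} one has $F_B \in \rho^2 C_0^{k-1,\alpha}({}^0\Lambda^2)$, and applying the weight-preserving $0$-differential operator $d_B^*$ of order $1$ yields
\[
d_B^* F_B \in \rho^2 C_0^{k-2,\alpha}({}^0\Lambda^1) \hookrightarrow \rho^\delta C_0^{k-2,\alpha}({}^0\Lambda^1),
\]
where the last inclusion uses the standing assumption $\delta < 2$.

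The main bookkeeping obstacle is separating the two distinct mechanisms by which the target weight $\rho^\delta$ arises: for $a$-dependent terms it comes directly from $a \in \rho^\delta C_0^{k,\alpha}$, whereas for the lone $\gamma$-only term it comes, via the useful lemma applied to a classical $2$-form, from the bound $\delta < 2$. Once this dichotomy is made explicit, well-definedness and smoothness reduce to a mechanical application of the mapping and multiplication properties of the weighted $C_0^{k,\alpha}$-spaces. The Yang--Mills hypothesis on $A_0$ is not needed for well-definedness here; it only guarantees that $\mathcal{Y}_{A_0}(A_0) = 0$, which will matter for the subsequent implicit function theorem argument.
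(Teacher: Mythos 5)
Your proof is correct and follows essentially the same strategy as the paper: expand $\mathcal{Y}_{A_{0}}$ as a cubic polynomial in its arguments, use the multiplication properties of Lemma \ref{lem: (the useful lemma)} to dispose of the $a$-dependent terms (the weight $\rho^{\delta}$ comes directly from $a$), and handle the lone $\gamma$-only piece by exploiting $\delta<2$ together with the classical $C^{k-1,\alpha}$ regularity of the relevant $2$-form. The only organizational difference is that you center the expansion at $B=A_{0}+e\left(\gamma\right)$ rather than at $A_{0}$, so your delicate term is $d_{B}^{*}F_{B}$ where the paper's is $d_{A_{0}}^{*}d_{A_{0}}e\left(\gamma\right)$; both are treated by the identical mechanism. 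One small caveat: $d_{B}$ and $d_{B}^{*}$ are not $0$-differential operators in the strict sense of Definition \ref{Def: local expression 0-diff} (their coefficients are only $C^{k,\alpha}$), but their boundedness on the weighted scale follows from writing $d_{B}=d_{A_{0}}+\left[e\left(\gamma\right)\land\cdot\right]$ and applying Lemma \ref{lem: (the useful lemma)}, so the argument stands. Your closing remark that the Yang--Mills hypothesis on $A_{0}$ plays no role in well-definedness is correct and consistent with the paper, which silently drops the constant term $d_{A_{0}}^{*}F_{A_{0}}$.
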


Before proving this lemma, let us introduce the $0$-differential
operators
\begin{align*}
P_{A_{0}} & =d_{A_{0}}^{*}d_{A_{0}}+\left[\cdot^{*}\contr F_{A_{0}}\right]\\
L_{A_{0}} & =d_{A_{0}}^{*}d_{A_{0}}+d_{A_{0}}d_{A_{0}}^{*}+\left[\cdot^{*}\contr F_{A_{0}}\right]
\end{align*}
on $\mathfrak{u}\left(\overline{E}\right)$-valued $0$-$1$-forms.
Note that $P_{A_{0}}$ is the linearization of $A\mapsto d_{A}^{*}F_{A}$,
and $L_{A_{0}}$ is the linearization of $A\mapsto d_{A}^{*}F_{A}+d_{A}d_{A_{0}}^{*}\left(A-A_{0}\right)$.
\begin{proof}
Write $\tilde{a}=e\left(\gamma\right)+a$. We can then rewrite $\mathcal{Y}_{A_{0}}$
as
\begin{align*}
\mathcal{Y}_{A_{0}}\left(A\right) & =P_{A_{0}}e\left(\gamma\right)+L_{A_{0}}a\\
 & +\frac{1}{2}d_{A_{0}}^{*}\left[\tilde{a}\land\tilde{a}\right]+\left[\tilde{a}^{*}\contr d_{A_{0}}\tilde{a}\right]+\frac{1}{2}\left[\tilde{a}^{*}\contr\left[\tilde{a}\land\tilde{a}\right]\right]\\
 & +d_{A_{0}}\left[e\left(\gamma\right)^{*}\contr a\right]+\left[\tilde{a}\land d_{A_{0}}^{*}a\right]+\left[\tilde{a}\land\left[e\left(\gamma\right)^{*}\contr a\right]\right].
\end{align*}
Arguing as in the proof of Lemma \ref{lem: smoothness of action},
we see that the last two rows are smooth
functions of $\gamma,a$ with values in $\rho^{\delta}C_{0}^{k-1,\alpha}\left(^{0}\Lambda^{1}\otimes\mathfrak{u}\left(\overline{E}\right)\right)$. Now, $L_{A_{0}}$ is a $0$-differential
operator, so $a\mapsto L_{A_{0}}a$ is bounded. Moreover, $\gamma\mapsto\left[e\left(\gamma\right)^{*}\contr F_{A_{0}}\right]$
is bounded by Lemma \ref{lem: (the useful lemma)}, because
\[
e:C^{k,\alpha}\left(T^{*}X_{\mathfrak{u}\left(\overline{E}_{|X}\right)}\right)\to C^{k,\alpha}\left(\Lambda_{\mathfrak{u}\left(\overline{E}\right)}^{1}\right)\hookrightarrow\rho C_{0}^{k,\alpha}\left(^{0}\Lambda_{\mathfrak{u}\left(\overline{E}\right)}^{1}\right)
\]
is bounded and, since $\delta<2$,
\[
F_{A_{0}}\in\Omega_{\mathfrak{u}\left(\overline{E}\right)}^{2}\subseteq\rho^{\delta}C_{0}^{k,\alpha}\left(^{0}\Lambda_{\mathfrak{u}\left(\overline{E}\right)}^{1}\right).
\]
It remains to prove that $\gamma\mapsto d_{A_{0}}^{*}d_{A_{0}}e\left(\gamma\right)$
is bounded. We can interpret
$
d_{A_{0}}$  as a differential operator between genuine  $\mathfrak{u}\left(\overline{E}\right)$-valued differential forms, so we get a bounded map
\[
d_{A_{0}}:C^{k,\alpha}\left(\Lambda_{\mathfrak{u}\left(\overline{E}\right)}^{1}\right)\to C^{k-1,\alpha}\left(\Lambda_{\mathfrak{u}\left(\overline{E}\right)}^{2}\right);
\]
since $\delta<2$, Lemma \ref{lem: (the useful lemma)} implies that
\[
C^{k-1,\alpha}\left(\Lambda_{\mathfrak{u}\left(\overline{E}\right)}^{2}\right)\hookrightarrow\rho^{\delta}C_{0}^{k-1,\alpha}\left(^{0}\Lambda_{\mathfrak{u}\left(\overline{E}\right)}^{2}\right),
\]
and since $d_{A_{0}}^{*}$ is a $0$-differential operator, we conclude
that $\gamma\mapsto d_{A_{0}}^{*}d_{A_{0}}e\left(\gamma\right)$ is
indeed bounded.
\end{proof}
\begin{rem}
This lemma is essentially the reason for which we chose $\delta<2$.
In fact, it is false for $\delta>2$, and although it is still true
for $\delta=2$, this case is problematic in dimension $n+1=4$ because
$2$ is an indicial root of $L_{A_{0}}$.
\end{rem}

The importance of the map $\mathcal{Y}_{A_{0}}$ for our purposes
is due to the following
\begin{prop}
\label{prop: nonlinear yang-mills}Let $A\in\mathcal{A}_{\delta}^{k,\alpha}$.
Then $\mathcal{Y}_{A_{0}}\left(A\right)=0$ if and only if $A$ is
Yang--Mills and belongs to the slice $\mathcal{S}_{A_{0}}$ defined
in Formula \ref{eq: slice}.
\end{prop}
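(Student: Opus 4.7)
The ($\Leftarrow$) direction is immediate from the definition of $\mathcal{Y}_{A_0}$: if $d_A^* F_A = 0$ and $d_{A_0+e(\gamma)}^* a = 0$, then both summands of $\mathcal{Y}_{A_0}(A)$ vanish. The content is the converse. Setting $\phi := d_{A_0+e(\gamma)}^* a$, the equation $\mathcal{Y}_{A_0}(A) = 0$ reads $d_A^* F_A + d_A \phi = 0$. The plan is to prove $\phi \equiv 0$; this forces both $A \in \mathcal{S}_{A_0}$ and $d_A^* F_A = 0$.

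My first move would be to apply $d_A^*$ to the equation, obtaining $\Delta_A \phi = -d_A^* d_A^* F_A$, where $\Delta_A = d_A^* d_A$ acts on sections of $\mathfrak{u}(\overline E)$. The key algebraic identity is that $d_A^* d_A^* F_A = 0$ identically for any Hermitian connection $A$. Indeed, as formal adjoints, $(d_A^*)^2 = (d_A^2)^* = [F_A \wedge \cdot]^*$; and pointwise, for every $\psi \in \mathfrak{u}(\overline E)$ one has
\[
\langle F_A, [F_A, \psi]\rangle_{{}^0\Lambda^2 \otimes \mathfrak{u}(\overline E)} = 0,
\]
since writing $F_A = \sum_I F_I\, e^I$ in an orthonormal basis of $\Lambda^2$ reduces this to $\sum_I \langle F_I, [F_I, \psi]\rangle_{\mathfrak{u}(\overline E)} = \sum_I \langle [F_I, F_I], \psi\rangle_{\mathfrak{u}(\overline E)} = 0$ by ad-invariance of the bilinear form $\langle a, b\rangle = -\mathrm{tr}(ab)$. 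Hence $[F_A \wedge \cdot]^* F_A = 0$ and $\Delta_A \phi = 0$.

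At this point I would conclude by the maximum-principle argument already used in the proof of Corollary \ref{cor: invertibility of laplacian on functions}. Since $A$ is Hermitian, $-\tfrac{1}{2}\Delta|\phi|^2 = |d_A \phi|^2 \geq 0$, so $|\phi|^2$ is subharmonic in the interior; the assumption $k \geq 3$ ensures $\phi \in \rho^\delta C_0^{k-1,\alpha}$ is classically $C^2$ on $M$, and the weight $\delta > 0$ together with Lemma \ref{lem: (the useful lemma)} places $\phi \in C^0(\overline{M})$ with $\phi_{|X} = 0$. The strong maximum principle then forces $\phi \equiv 0$, which completes the proof.

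The main technical obstacle is the identity $d_A^* d_A^* F_A = 0$; it is elementary but must be unpacked carefully via ad-invariance/cyclicity of the trace. A secondary concern is that iterating $d_A^*$ on the equation $\mathcal{Y}_{A_0}(A) = 0$ be meaningful on the chosen Banach spaces, but this is automatic since $k \geq 3$ gives enough derivatives for $F_A$ and $d_A^* F_A$ to lie in $C_0^{k-1,\alpha}$ and $C_0^{k-2,\alpha}$ respectively.
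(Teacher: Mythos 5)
Your proof is correct and takes essentially the same route as the paper: apply $d_A^*$, exploit the Bianchi-type vanishing $d_A^*d_A^*F_A = 0$ (your adjoint/ad-invariance computation and the paper's identity $d_A^*d_A^*F_A = (-1)^n\star[F_A\wedge\star F_A]=0$ are the same fact), and kill $\phi = d_{A_0+e(\gamma)}^*a$ with the maximum-principle argument from Corollary \ref{cor: invertibility of laplacian on functions}. The paper additionally checks that $\Delta_A$, for the merely H\"older connection $A$, is a compact perturbation of $\Delta_{A_0}$ and hence still Fredholm of index zero, but since only triviality of the kernel is needed here, your shortcut straight to the maximum principle is sound; the one point worth keeping explicit, as you do, is that $k\geq 3$ ensures $\phi$ is classically $C^2$ so the equation $\Delta_A\phi=0$ has pointwise meaning.
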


\begin{proof}
The direction $\Leftarrow$ is automatic. Let's prove the direction
$\Rightarrow$. Write $A=A_{0}+e\left(\gamma\right)+a=A_{0}+\tilde{a}$
as above. Then
\[
d_{A}^{*}=d_{A_{0}}^{*}+\left[\tilde{a}^{*}\contr\cdot\right]:\rho^{\delta}C_{0}^{k-2,\alpha}\left(^{0}\Lambda_{\mathfrak{u}\left(\overline{E}\right)}^{1}\right)\to\rho^{\delta}C_{0}^{k-3,\alpha}\left(^{0}\Lambda_{\mathfrak{u}\left(\overline{E}\right)}^{1}\right)
\]
is bounded, because $d_{A_{0}}^{*}$ is a $0$-differential operator; moreover, since $\tilde{a}\in\rho C_{0}^{k,\alpha}$, by Lemma \ref{lem: (the useful lemma)}
$\left[\tilde{a}\contr\cdot\right]$ is bounded. Apply now $d_{A}^{*}$
to the equation $\mathcal{Y}_{A_{0}}\left(A\right)=0$: since \mbox{$d_A^* d_A^* F_A = \left(-1\right)^{n}\star\left[F_{A}\land\star F_{A}\right]=0$}, it follows that
\[
\Delta_{A}\left(d_{A_{0}+e\left(\gamma\right)}^{*}a\right)=0.
\]
Now, if $A$ were smooth, Corollary \ref{cor: invertibility of laplacian on functions} would imply that $d_{A_{0}+e\left(\gamma\right)}^{*}a=0$, i.e.\ that
$A\in\mathcal{S}_{A_{0}}$. However, notice that
\[
\Delta_{A}=\Delta_{A_{0}}+\left[\tilde{a}^{*}\contr d_{A_{0}}\cdot\right]+d_{A_{0}}^{*}\left[\tilde{a}\land\cdot\right]+\left[\tilde{a}^{*}\contr\left[\tilde{a}\land\cdot\right]\right].
\]
Since $\tilde{a}\in\rho C_{0}^{k,\alpha}$ and $d_{A_{0}},d_{A_{0}}^{*}$
are first order $0$-differential operators, Lemma \ref{lem: (the useful lemma)}
implies that the map
\[
\Delta_{A}-\Delta_{A_{0}}:\rho^{\delta}C_{0}^{k,\alpha}\to\rho^{\delta+1}C_{0}^{k-1,\alpha}\hookrightarrow\rho^{\delta}C_{0}^{k-2,\alpha}
\]
is \emph{compact}. Therefore, $\Delta_{A}$ is still Fredholm of index zero. Now, the proof of Corollary \ref{cor: invertibility of laplacian on functions}
works for our specific choice of $k$, so we can conclude that
\[
\Delta_{A}:\rho^{\delta}C_{0}^{k,\alpha} \left( \mathfrak{u}\left(\overline{E}\right)\right)\to\rho^{\delta}C_{0}^{k-2,\alpha}\left( \mathfrak{u}\left(\overline{E}\right)\right)
\]
is invertible.
\end{proof}
\begin{rem}
Here we used $k\geq3$ in order to make sense of the equations above classically.
\end{rem}

The last proposition shows that the zero locus of $\mathcal{Y}_{A_{0}}$
is exactly the set of solutions of the gauge-fixed boundary value
problem \ref{eq: gauge-fixed bvp}. We are now ready to prove the
main theorem:
\begin{thm}
\label{thm: main theorem}Let $A_{0}$ be a smooth Yang--Mills connection
on $\overline{E}$. Suppose that $L_{A_{0}}$ has trivial $L_{0}^{2}$
kernel. Then there are neighborhoods 
\begin{align*}
\mathcal{U} \subseteq C^{k,\alpha}\left(T^{*}X_{\mathfrak{u}\left(\overline{E}\right)}\right) & , 0\in \mathcal{U} \\
\mathcal{V} \subseteq \rho^{\delta}C_{0}^{k,\alpha}\left(^{0}\Lambda_{\mathfrak{u}\left(\overline{E}_{|X}\right)}^{1}\right) & , 0 \in \mathcal{V}
\end{align*}
and a unique smooth map $a:\mathcal{U}\to\mathcal{V}$ such that $a\left(0\right)=0$,
and for every $\gamma\in\mathcal{U}$, $A_{0}+e\left(\gamma\right)+a\left(\gamma\right)$
satisfies the boundary value problem \ref{eq: gauge-fixed bvp}.
\end{thm}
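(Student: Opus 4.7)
The plan is to solve the equation $\mathcal{Y}_{A_{0}}(\gamma,a)=0$ for $a$ as a smooth function of $\gamma$ near $(0,0)$ by a direct application of the implicit function theorem, using the Banach space splitting $\mathcal{A}_{\delta}^{k,\alpha}\cong C^{k,\alpha}(T^{*}X_{\mathfrak{u}(\overline{E}_{|X})})\oplus\rho^{\delta}C_{0}^{k,\alpha}({^{0}\Lambda_{\mathfrak{u}(\overline{E})}^{1}})$ established in Subsection~\ref{subsec:Gauge-fixing}. Smoothness of
\[
\mathcal{Y}_{A_{0}}:C^{k,\alpha}\bigl(T^{*}X_{\mathfrak{u}(\overline{E}_{|X})}\bigr)\oplus\rho^{\delta}C_{0}^{k,\alpha}\bigl({^{0}\Lambda_{\mathfrak{u}(\overline{E})}^{1}}\bigr)\to\rho^{\delta}C_{0}^{k-2,\alpha}\bigl({^{0}\Lambda_{\mathfrak{u}(\overline{E})}^{1}}\bigr)
\]
is Lemma~\ref{lem: smoothness of Y_A_0}, and $\mathcal{Y}_{A_{0}}(0,0)=d_{A_{0}}^{*}F_{A_{0}}=0$ because $A_{0}$ is Yang--Mills. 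Hence the only nontrivial hypothesis to verify for the IFT is that the partial derivative $\partial_{a}\mathcal{Y}_{A_{0}}|_{(0,0)}$ is a Banach space isomorphism.

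To compute this derivative, I note that at $\gamma=0$ one has $A=A_{0}+a$ and $d_{A_{0}+e(\gamma)}^{*}=d_{A_{0}}^{*}$, so
\[
\mathcal{Y}_{A_{0}}(0,a)=d_{A_{0}+a}^{*}F_{A_{0}+a}+d_{A_{0}+a}d_{A_{0}}^{*}a.
\]
The first term has linearization $P_{A_{0}}a$ at $a=0$ by the remark preceding Lemma~\ref{lem: smoothness of Y_A_0}, and differentiating the second term gives $d_{A_{0}}d_{A_{0}}^{*}a$ (the correction $[a\land d_{A_{0}}^{*}a]$ is quadratic in $a$). Therefore
\[
(\partial_{a}\mathcal{Y}_{A_{0}})|_{(0,0)}(a)=P_{A_{0}}a+d_{A_{0}}d_{A_{0}}^{*}a=L_{A_{0}}a.
\]
By Theorem~\ref{thm: Fredholmness of laplacians}(2), since $\delta\in(1,n-1)$ (which is nonempty thanks to $n+1\geq 4$), the map $L_{A_{0}}:\rho^{\delta}C_{0}^{k,\alpha}({^{0}\Lambda_{\mathfrak{u}(\overline{E})}^{1}})\to\rho^{\delta}C_{0}^{k-2,\alpha}({^{0}\Lambda_{\mathfrak{u}(\overline{E})}^{1}})$ is Fredholm of index zero, and its kernel coincides with the $L_{0}^{2}$ kernel, which vanishes by hypothesis. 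A Fredholm map of index zero with trivial kernel is invertible, so $\partial_{a}\mathcal{Y}_{A_{0}}|_{(0,0)}$ is an isomorphism of Banach spaces.

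The IFT now produces neighborhoods $\mathcal{U}\subseteq C^{k,\alpha}(T^{*}X_{\mathfrak{u}(\overline{E}_{|X})})$ of $0$ and $\mathcal{V}\subseteq\rho^{\delta}C_{0}^{k,\alpha}({^{0}\Lambda_{\mathfrak{u}(\overline{E})}^{1}})$ of $0$, together with a unique smooth map $a:\mathcal{U}\to\mathcal{V}$ with $a(0)=0$ and $\mathcal{Y}_{A_{0}}(\gamma,a(\gamma))=0$ for every $\gamma\in\mathcal{U}$. By Proposition~\ref{prop: nonlinear yang-mills}, each $A_{0}+e(\gamma)+a(\gamma)$ is then a Yang--Mills connection lying in the slice $\mathcal{S}_{A_{0}}$, and from the discussion following Definition~\ref{def: space of connections} its restriction to $X$ equals $A_{0|X}+\gamma$, so it solves the gauge-fixed boundary value problem~\ref{eq: gauge-fixed bvp}.

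I do not anticipate any genuine obstacle in this proof: the nontrivial analytic work (identifying the normal operators in Lemma~\ref{lem: normal ops of L_A}, the Fredholm theorem and kernel identification in Theorem~\ref{thm: Fredholmness of laplacians}, the smoothness of $\mathcal{Y}_{A_{0}}$, and Proposition~\ref{prop: nonlinear yang-mills}) has already been done. The only care required is the algebraic verification that the $a$-linearization of $\mathcal{Y}_{A_{0}}$ at $(0,0)$ is exactly $L_{A_{0}}$, i.e.\ that the gauge-fixing term precisely supplies the missing $d_{A_{0}}d_{A_{0}}^{*}$ piece so that the Fredholm hypothesis on $L_{A_{0}}$—rather than on $P_{A_{0}}$, which has a large kernel coming from $d_{A_{0}}$-exact $1$-forms—can be invoked.
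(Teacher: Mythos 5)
Your proposal is correct and follows essentially the same route as the paper: split $\mathcal{A}_{\delta}^{k,\alpha}$ into its $\gamma$- and $a$-components, identify the $a$-partial derivative of $\mathcal{Y}_{A_{0}}$ at $(0,0)$ with $L_{A_{0}}$, invoke Theorem~\ref{thm: Fredholmness of laplacians} plus the nondegeneracy hypothesis to get invertibility, and apply the implicit function theorem, with Proposition~\ref{prop: nonlinear yang-mills} translating zeros of $\mathcal{Y}_{A_{0}}$ back into solutions of \ref{eq: gauge-fixed bvp}. You are slightly more explicit than the paper in spelling out the computation of $\partial_{a}\mathcal{Y}_{A_{0}}|_{(0,0)}$ and in invoking Proposition~\ref{prop: nonlinear yang-mills} at the end, but there is no substantive difference.
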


\begin{proof}
The computation in the proof of Lemma \ref{lem: smoothness of Y_A_0}
implies that
\begin{align*}
d\left(\mathcal{Y}_{A_{0}}\right)_{A_{0}}:T_{A_{0}}\mathcal{A}_{\delta}^{k,\alpha} & \to\rho^{\delta}C_{0}^{k-2,\alpha}\left(^{0}\Lambda_{\mathfrak{u}\left(\overline{E}\right)}^{1}\right)\\
\left(\gamma,a\right) & \mapsto P_{A_{0}}e\left(\gamma\right)+L_{A_{0}}a.
\end{align*}
The restriction of this linearization to the second component is just
\[
L_{A_{0}}:\rho^{\delta}C_{0}^{k,\alpha}\left(^{0}\Lambda_{\mathfrak{u}\left(\overline{E}\right)}^{1}\right)\to\rho^{\delta}C_{0}^{k-2,\alpha}\left(^{0}\Lambda_{\mathfrak{u}\left(\overline{E}\right)}^{1}\right),
\]
and by Theorem \ref{thm: Fredholmness of laplacians}, this map is
Fredholm of index zero, with kernel equal to the $L_{0}^{2}$ kernel. Since this kernel vanishes, the statement of the theorem follows immediately
from the implicit function theorem.
\end{proof}
Following the Poincaré--Einstein analogy with \cite{LeeFredholm},
we say that a smooth Yang--Mills connection $A_{0}$ is \emph{nondegenerate}
if the $L_{0}^{2}$ kernel of $L_{A_{0}}$ is trivial. Let's now give
examples of nondegenerate connections. The following corollary confirms
an expectation of Edward Witten in \cite{Witten}, and may be considered
as the Yang--Mills analogue of Graham and Lee's Theorem A in \cite{GrahamLee}.
\begin{cor}
\label{cor: Witten conjecture}Suppose that $\overline{M}$ satisfies $H^{1}\left(\overline{M},X\right)=0.$
Let $A_{0}$ be the trivial connection on the trivial bundle $\overline{E}=\overline{M}\times\mathbb{C}^{r}$
over $\overline{M}$. Then $A_{0}$ is nondegenerate, and hence Theorem
\ref{thm: main theorem} applies.
\end{cor}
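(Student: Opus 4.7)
My plan is to reduce the nondegeneracy of $A_0$ to a direct application of the Mazzeo Hodge theorem stated earlier in the paper.

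First I would observe that, since $A_0$ is the trivial connection on the trivial bundle $\overline{E} = \overline{M} \times \mathbb{C}^r$, its curvature $F_{A_0}$ vanishes identically. Hence the zero-order term $[\cdot^{*}\contr F_{A_0}]$ in the definition of $L_{A_0}$ drops out, and what remains is just the twisted Hodge Laplacian $d_{A_0}^{*}d_{A_0} + d_{A_0}d_{A_0}^{*}$ on $\mathfrak{u}(\overline{E})$-valued $0$-$1$-forms. Moreover, with respect to a global unitary frame of $\overline{E}$, $d_{A_0}$ acts componentwise as the untwisted exterior derivative, so $L_{A_0}$ is isometrically the direct sum of $r^{2} = \dim \mathfrak{u}(r)$ copies of the scalar Hodge Laplacian $\Delta_{1}$ on $0$-$1$-forms over $(\overline{M},g)$.

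Next I would invoke Mazzeo's theorem quoted just before Theorem \ref{thm: Fredholmness of laplacians}: for $k < n/2$, the $L^{2}_{0}$ kernel of $\Delta_{k}$ on an oriented conformally compact manifold $(\overline{M}^{n+1},g)$ is isomorphic to $H^{k}(\overline{M},X)$. Here $k=1$, and the hypothesis $k < n/2$ becomes $n \geq 3$, i.e.\ $n+1 \geq 4$, which is precisely the standing assumption on the dimension. So the $L^{2}_{0}$ kernel of $\Delta_{1}$ is isomorphic to $H^{1}(\overline{M},X)$, and by hypothesis this group vanishes.

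Combining the two observations, the $L^{2}_{0}$ kernel of $L_{A_0}$ is the direct sum of $r^{2}$ copies of $H^{1}(\overline{M},X) = 0$, hence trivial. Therefore $A_0$ is nondegenerate in the sense defined after Theorem \ref{thm: main theorem}, and that theorem applies verbatim. There is no real obstacle here: the only subtlety is bookkeeping the dimensional range needed to invoke Mazzeo's Hodge theorem, and this matches exactly the blanket hypothesis $n+1\geq 4$ already built into the paper (cf.\ Remark \ref{rem: why n+1>=4}).
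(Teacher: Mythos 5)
Your argument is exactly the paper's: kill the curvature term, identify $L_{A_0}$ with $r^{2}$ copies of the scalar Hodge Laplacian $\Delta_{1}$ via a global unitary frame, and invoke Mazzeo's theorem that $\ker_{L^{2}_{0}}\Delta_{1}\cong H^{1}(\overline{M},X)$. The only addition is your explicit check that the hypothesis $k<n/2$ of Mazzeo's theorem holds for $k=1$ precisely when $n+1\geq 4$, which the paper leaves implicit in its standing dimensional assumption; this is a worthwhile piece of bookkeeping, but the proof is the same.
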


\begin{proof}
Fixing a basis of $\mathfrak{u}\left(r\right)$, we get an identification
$\mathfrak{u}\left(\overline{E}\right)\equiv\overline{M}\times\mathbb{R}^{r^{2}}$.
Since $A_{0}$ is the trivial connection, its curvature vanishes
and $L_{A_{0}}$ is just the direct sum
of $r^{2}$ copies of the Hodge Laplacian $\Delta_{1}$ on $0$-$1$-forms.
Now, by Theorem 1.3 in \cite{MazzeoHodge}, the $L_{0}^{2}$ kernel
of $\Delta_{1}$ is isomorphic to $H^{1}\left(\overline{M},X\right)$,
and hence it vanishes by hypothesis. It follows that $A_0$ is nondegenerate.
\end{proof}
Observe that the converse argument gives examples of \emph{degenerate} Yang--Mills
connections: if $H^{1}\left(\overline{M},X\right)\not=0$, then
trivial connections are degenerate.



\bibliographystyle{abbrv}

\end{document}